\theoremstyle{plain}
\newtheorem{theorem}{Theorem}[section]
\newtheorem{lemma}[theorem]{Lemma}
\newtheorem{assumption}[theorem]{Assumption}
\theoremstyle{definition}
\newtheorem{definition}[theorem]{Definition}
\newtheorem{remark}[theorem]{Remark}
\numberwithin{equation}{section}
\newcommand{\linspan}{\mathop{\rm span}\nolimits}
\newcommand{\rest}{\left.\kern-2\nulldelimiterspace\right|_}
\newcommand{\norm}[2]{\left|#1\right|_{#2}}
\newcommand{\dnorm}[2]{\left\|#1\right\|_{#2}}
\newcommand{\Id}{{\mathbf1}}
\newcommand{\indf}{1}
\newcommand{\clA}{{\mathcal A}}
\newcommand{\clB}{{\mathcal B}}
\newcommand{\clC}{{\mathcal C}}
\newcommand{\clE}{{\mathcal E}}
\newcommand{\clF}{{\mathcal F}}
\newcommand{\clK}{{\mathcal K}}
\newcommand{\clL}{{\mathcal L}}
\newcommand{\clU}{{\mathcal U}}
\newcommand{\clV}{{\mathcal V}}
\newcommand{\clW}{{\mathcal W}}
\newcommand{\bbN}{{\mathbb N}}
\newcommand{\bbR}{{\mathbb R}}
\newcommand{\bbW}{{\mathbb W}}
\newcommand{\fkB}{{\mathfrak B}}
\newcommand{\fkE}{{\mathfrak E}}
\newcommand{\fkM}{{\mathfrak M}}
\newcommand{\fkT}{{\mathfrak T}}
\newcommand{\fkY}{{\mathfrak Y}}
\newcommand{\fkZ}{{\mathfrak Z}}
\newcommand{\rmD}{{\mathrm D}}
\newcommand{\bfn}{{\mathbf n}}
\newcommand{\rmd}{{\mathrm d}}
\newcommand{\rme}{{\mathrm e}}
\newcommand{\rms}{{\mathrm s}}
\newcommand{\fkp}{{\mathfrak p}}
\newcommand{\fky}{{\mathfrak y}}
\newcommand{\ovlineC}[1]{\overline C_{\left[#1\right]}}
\definecolor{DarkBlue}{rgb}{0,0.08,0.45}
\definecolor{DarkRed}{rgb}{.65,0,0}
\definecolor{applegreen}{rgb}{0.55, 0.71, 0.0}
\newcounter{mymac@matlab}
\newcommand{\matlab}{MATLAB%
   \ifnum\value{mymac@matlab}<1%
   \textregistered%
   \setcounter{mymac@matlab}{1}%
   \fi%
  }
\begin{document}
\title{Stabilization of nonautonomous linear parabolic equations with inputs subject to  time-delay}
\author{Karl Kunisch$^{\tt1,2}$}
\author{S\'ergio S.~Rodrigues$^{\tt1}$}
\thanks{
\vspace{-1em}\newline\noindent
{\sc MSC2020}: 93C43,93D15,93B52,93B53,93C05, 35Q93
\newline\noindent
{\sc Keywords}: exponential stabilization;  time-delayed input; nonautonomous parabolic equations; projections based explicit feedback; finite-dimensional input and output
\newline\noindent
$^{\tt1}$ Johann Radon Inst. Comput. Appl. Math.,
  \"OAW, Altenbergerstr. 69, 4040 Linz, Austria.
 \newline\noindent  
  $^{\tt2}$ Inst. Math. and  Scient. Comput., Karl-Franzens University Graz, Heinrichstr. 36, 8010 Graz, Austria.
\quad
\newline\noindent
{\sc Emails}:
{\small\tt karl.kunisch@uni-graz.at,\quad sergio.rodrigues@ricam.oeaw.ac.at}%
}

\begin{abstract}
The stabilization of nonautonomous parabolic equations is achieved by feedback inputs tuning a finite number of actuators, where it is assumed that the input is subject to a time delay. To overcome destabilizing effects of the time delay, the input is based on a prediction of the state at a future time. This prediction is computed depending on a state-estimate at the current time, which in turn is provided by a Luenberger observer. 
The observer is designed using the output of measurements performed by a finite number of sensors. The asymptotic behavior of the resulting coupled system is investigated.
Numerical simulations are presented validating the theoretical findings, including tests showing the response against sensor measurement errors.
\end{abstract}

\maketitle

\pagestyle{myheadings} \thispagestyle{plain} \markboth{\sc  K. Kunisch and S. S.
Rodrigues}{\sc }

\section{Introduction}
The stabilization of unstable processes is an important problem in many applications. It involves the design of stabilizing control inputs, often demanded in feedback form $u(t)=K(t,y(t))$ due to their robustness properties against small errors which are ubiquitous in practice.

A system, which is stable for a given feedback input, may become unstable if the input is  subject to a time-delay~$\tau>0$, resulting in $u^{[\tau]}(t)=K(t-\tau,y(t-\tau))$. In this case, such time-delays cannot be neglected. We need  to construct~$K=K_\tau$ depending on~$\tau$, leading to $u_\tau^{[\tau]}(t)=K_\tau(t-\tau,y(t-\tau))$. 
If the state~$y(t)$ is not available, we take the   input~$\widehat u_\tau^{[\tau]}(t)=K_\tau(t-\tau,\widehat y(t-\tau))$, using a state estimate~$\widehat y(t-\tau)$.
Throughout, we denote $g^{[\tau]}(t)\coloneqq g(t-\tau)$. The superscript~$[\tau]$, in~$\widehat u_\tau^{[\tau]}$,  means that the input~$u_\tau(t)$ is delayed by time~$\tau$; the subscript means that the feedback-input operator~$K_\tau$ will be constructed depending on~$\tau$, based on a predictor.

\subsection{On the novelties} 
An important aspect of our work is presented by the fact that delayed stabilizing dynamic output-feedbacks are investigated for nonautonomous infinite-dimensional systems. Furthermore, we present a strategy able to tackle general higher-dimensional spatial domains.

For the particular case of autonomous (time-invariant) dynamics and if the state~$y(t)$ is available, we can mention~\cite{ManitiusOlbrot79,KwonPearson80} for finite-dimensional systems, and~\cite{DjebourTakahValein22,LhachemiPrieur21,LhachemiPrieur22,LhachemiPrieurShorten19,LhachemiPrieurTrelat21,Munteanu23} for infinite-dimensional systems. For nonautonomous dynamics we refer to the seminal work~\cite{Artstein82} for finite-dimensional systems. In the context of infinite-dimensional systems, the analysis of nonautonomous dynamics with delayed inputs has been addressed in~\cite{Haad06}, but we are not aware of works on stabilizability results with delayed feedback inputs, in this context. Below, we shall see that some tools used for autonomous dynamics and finite-dimensional systems are not appropriate to deal with nonautonomous infinite-dimensional systems.

In the particular case of one-dimensional spatial domains and boundary controls, we have at our disposal a powerful technique using appropriate backstepping transformations; see the survey~\cite{VazquezAuriolBArgomedoKrstic26}. Further, the delay can be tackled by the use of an auxiliary variable and considering an extended system. We refer the reader to~\cite[Sect.~2]{Krstic09}.  These techniques have been further explored in~\cite{QiKrstic21,WangDiagneQi22,WangQiDiagne21,WangDiagneKrstic25}, for example. In higher-dimensional domains the use of backstepping techniques is still limited to particular geometries (cf.~\cite[Sect.~7.5]{VazquezAuriolBArgomedoKrstic26}); see~\cite{JadachowskiMeurerKugi15} for rectangles/boxes and~\cite{VasquezKrstic16} for disks/balls; see also~\cite[Sect.~5]{Vasquez25} where it is mentioned that a domain extension technique could be explored since it transforms boundary control problems in a given spatial domain into boundary/internal control problems in larger domains (cf.~\cite[Sect.~3]{FursikovImanuvilov99}, \cite{Rod14-na}). In this manuscript we consider internal controls and present a design strategy able to tackle  general polygonal/polyhedral domains.

A predictor will be utilized to counteract the destabilizing effects of the delay  (cf.~\cite{Krstic08}, \cite{Artstein82}). At the same time, to account for the possibility that   the state~$y(t)$ is not available, the proposed  strategy will involve a Luenberger observer, providing us with an estimate~$\widehat y(t)$ for~$y(t)$. This results in a system coupling the state dynamics, the predictor, and the observer, which we believe has not been addressed in the literature, in the context of nonautonomous systems, not even in the case of finite-dimensional systems. For the autonomous case we can mention~\cite{LhachemiPrieur22} where the eigenpairs of the time-independent diffusion-reaction-convection operator have been explored. Further, we shall give the nominal feedback-input and output-injection operators in an explicit form simple to compute numerically, which is important in practice.

\subsection{Destabilizing effect of delays}\label{sS:intro-delayDest}  

Stabilizing feedback inputs can become destabilizing under  arbitrarily small time-delays in the context of partial differential equations ({\sc pde}s) of hyperbolic type (cf.~\cite[Thm.~3.1]{Datko88}). Such a destabilizing effect is also present for {\sc pde}s of parabolic type, if the time-delay~$\tau$ is larger than a suitable threshold~$\tau_*>0$ depending on the free dynamics. Further,  given~$\tau_*$ we can find a parabolic model with feedback input delay threshold~$\tau_*$. Since the parabolic case may not have been explicitly addressed in the literature, we shall give an example later on.

\subsection{Prediction and time irreversibility}\label{sS:intro-prediction}
The input~$u(t)$ at time~$t$  will be based on a prediction~$y_\fkp(t+\tau)$ of the unknown state~$y(t+\tau)$, at the future time~$t+\tau$,  using a state estimate~$\widehat y(t)$ available at time~$t$.
In the autonomous case, with free dynamics $\dot y=\clA y$,  and if time can be reversed and if the state is known, $\widehat y(t)=y(t)$, the strategy is comparable to the ones followed in~\cite{ManitiusOlbrot79,Artstein82,KwonPearson80,Krstic08,BPetriPrieurTrelat18} for finite-dimensional systems, where the state~$y(t)$ is transformed into
\begin{equation}\label{intro-pre-psi}
 \psi(t)\coloneqq y(t)+{\textstyle\int_{t-\tau}^t} \rme^{(t-\tau-s)\clA}Bu(s)\rmd s,
 \end{equation}
 and the feedback input is sought in the form~$u(t)= \widetilde K(\psi(t))$. 
 Indeed, note that from~\eqref{intro-pre-psi}
 we find
 \begin{align}
 \rme^{\tau\clA}\psi(t)&=\rme^{\tau  \clA}y(t)+{\textstyle\int_{t-\tau}^t} \rme^{(t-s)\clA}Bu(s)\rmd s\notag\\
 &=\rme^{\tau \clA}y(t)+{\textstyle\int_{t}^{t+\tau}} \rme^{(t+\tau-r)\clA}Bu(r-\tau)\rmd r,\notag
 \end{align}
which shows that~$\rme^{\tau\clA}\psi(t)=y(t+\tau)$. That is, the input~$u(t)=\widetilde K(\psi(t))=K_\tau(y(t+\tau))$ is a function of the state at the future time~$t+\tau$, with~$K_\tau\coloneqq \widetilde K\circ\rme^{\tau \clA}$.

In case the state~$y(t)$ is not available we will use a state estimate~$\widehat y(t)$ instead, by setting 
\begin{align}
\hspace{-.3em}y_\fkp (t\!+\!\tau)&\coloneqq \rme^{\tau\clA}\widehat y(t)\!+\!{\textstyle\int_{t}^{t+\tau}}  \rme^{(t+\tau-r)\clA}Bu(r\!-\!\tau)\rmd r,\label{intro-pre-psi-exp}\hspace{-.5em}
 \end{align}
resulting in the input~$u(t)=K_\tau(y_\fkp(t+\tau))$. We use~\eqref{intro-pre-psi-exp} instead of~\eqref{intro-pre-psi} because, in general, for parabolic-like equations with state~$y(t)$ evolving in an infinite-dimensional space, the reversibility of time does not hold and thus  the integrand in~\eqref{intro-pre-psi} is not necessarily well defined. Note that $t-\tau-s<0$ for~$s>t-\tau$.

\subsection{Stabilizability and nonautonomous dynamics}\label{sS:intro-stabilizability}
In the particular case of  parabolic  autonomous dynamics, the spectral properties of the time-independent operator~$\clA$ can be used to reduce the problem of stabilizability 
to the stabilizability of a finite-dimensional system.
This strategy is followed, for example, in~\cite[Sect.~2]{DjebourTakahValein22}, \cite[Sect.~3]{LhachemiPrieurShorten19}, \cite[Sect.~3]{Munteanu23}.
Spectral properties of time-dependent operators~$\clA(t)$,  however,  are not an appropriate tool to investigate stability properties, see~\cite{Wu74}.  

Here, we consider  linear parabolic equations
 \begin{subequations}\label{sys-y-intro0} 
 \begin{align}
 &\dot y +Ay+ A_{\rm rc}y =B\widehat u_\tau^{[\tau]},\quad y(0)= y_0,\label{sys-y-intro0-dyn} 
 \intertext{where  the state~$y(t)$, $t\ge0$, evolves in a real separable Hilbert space~$H$. Here,~$A$ is a symmetric diffusion-like operator, $A_{\rm rc}=A_{\rm rc}(t)$ is a reaction--convection-like linear operator, $B$ is a linear control operator, and we seek a  feedback input~$\widehat u_\tau(t)=(\widehat u_{\tau,1}(t),\dots,\widehat u_{\tau,m}(t))\in\bbR^m$ which is subject to a time-$\tau$ delay,}
 &\widehat u_\tau^{[\tau]}(t)\coloneqq u_\tau(t-\tau),\\
 &\mbox{with}\quad \widehat u_\tau(t)\coloneqq
 \begin{cases}
 0,&\mbox{for } t\in[-\tau,0),\\
 K_\tau(t,\widehat y(t)),&\mbox{for } t\in[0,\infty),
 \end{cases}
 \end{align} 
 where~$\widehat y(t)$ is an estimate for~$y(t)$. The goal is to design a stabilizing operator~$K_\tau$ so that~$y(t)\to0$ converges to zero as~$t\to\infty$.
The control forcing $B\widehat  u_\tau^{[\tau]}(t)$ is, at each time~$t\ge0$, a linear combination 
 \begin{equation}
 B \widehat u_\tau^{[\tau]}(t)\coloneqq{\textstyle \sum\limits_{i=1}^m}\widehat u^{[\tau]}_{\tau,i}(t)\Phi_i\label{B-intro}
\end{equation}
of a finite number~$m$ of given actuators~$\Phi_i\in H$.
\end{subequations}
Given a feedback-input operator~$K$ so that the nominal system
 \begin{equation}\label{sys-intro-nominal}
 \dot y +Ay+A_{\rm rc}y =BK(t,y(t)),\quad y(0)= y_0,
 \end{equation}
 is stable, with state feedback input~$u(t)=K(t,y(t))$, the  actual dynamics is of the form
  \begin{equation}\notag
 \dot y +Ay+A_{\rm rc}y =BK(t-\tau,\widehat y(t-\tau)),\qquad y(0)= y_0,
 \end{equation}
with delayed input~$\widehat u^{[\tau]}(t)=K(t-\tau,\widehat y(t-\tau))$.
By nominal system/dynamics we mean the system/dynamics under no disturbances, namely, the ideal (nonrealistic) situation in~\eqref{sys-intro-nominal}, with vanishing input delay $\tau=0$ and with availability of the full state~$y(t)$.

\subsection{Detectability}
The estimate~$\widehat y$ is given by a Luenberger observer 
 \begin{align}\label{observer-intro}
 \hspace{-.5em}&\dot{\widehat y}+A\widehat y+ A_{\rm rc}\widehat y =Bu_\tau^{[\tau]}\!+\!LW(\widehat y-y),\hspace{.7em}\widehat y(0)\!=\! \widehat y_0,\hspace{-1em}
 \end{align}
 with the input as in~\eqref{sys-y-intro0}. Here~$W\colon H\to\bbR^s$ is a linear output operator, representing the measurements $w(t)=Wy(t)=(w_1(t),\dots,w_s(t))$ obtained by a finite number~$s$ of sensors, at time~$t$, and~$L\colon \bbR^s\to H$ is an output-injection operator to be designed. The initial state~$\widehat y_0$ of the observer is at our disposal. It can be chosen as an initial guess we might have for~$y_0$. The goal is to design a detecting operator~$L$ so that~$\widehat y(t)\to y(t)$ as~$t\to\infty$.

\subsection{Contents}
In Section~\ref{sS:Assum} we present the general assumptions on the tuple of operators~$(A,A_{\rm rc},B,W)$, in particular, to guarantee  the well posedness of the involved problems and the possibility of the designs of the sought feedback-input operator~$K$ and of the sought   output-injection operator~$L$.
Section~\ref{sS:Aux-nominal} gathers auxiliary results concerning the regularity and boundedness of the solutions.
The destabilizing effects of delayed feedback inputs is shown in Section~\ref{S:dest-delayK} through an example of a scalar {\sc ode}, satisfying the assumptions made in Section~\ref{sS:Assum}.
This example is used, in the Appendix, to show the analogous destabilizing effects for a more general class of parabolic equations, including reaction-diffusion equations.
The proof of the main result, stated in Theorem~\ref{T:main}, is given in Section~\ref{S:stabil}, for the considered class of abstract evolution equations. Section~\ref{S:observers} focuses on the case of exponential Luenberger observers.
In Section~\ref{S:satAssum} it is shown that the abstract assumptions are satisfied by scalar parabolic equations.
Results of numerical simulations are discussed in Section~\ref{S:numer}.
Comments on the results are given in Section~\ref{S:finremks}.

\section{Assumptions and auxiliary results}\label{S:AssumAux}
Hereafter, $H$ and~$V$ are two real separable Hilbert spaces. The former is considered as pivot space, $H=H'$.  The identity operator shall be denoted by~$\Id$.

\begin{definition}\label{D:clAstab}
Let~$D\ge1$ and~$\mu\ge0$. The operator $\clA\coloneqq\clA(t)\in\clL(V,V')$ is called~$(D,\mu)$-stable if, for all $z_0\in H$,  the solution of the system~$\bigl\{\dot z=\clA z,\; z(0)=z_0\bigr\}$
satisfies $\norm{z(t)}{H}\le D\rme^{-\mu(t-s)}\norm{z(s)}{H}$ for all $t\ge s\ge 0$.
\end{definition}

\subsection{Assumptions}\label{sS:Assum}
We start by making general assumptions on the operators~$A$ and~$A_{\rm rc}$, defining the free dynamics.

\begin{assumption}\label{A:HV}
The inclusion $V\subseteq H$ is dense, continuous, and compact.
\end{assumption}

\begin{assumption}\label{A:A}
The operator~$A\in\clL(V,V')$ is symmetric and~$(y,z)\mapsto\langle A y,z\rangle_{V',V}$ is a complete scalar product on~$V.$
\end{assumption}

We suppose that~$V$ is endowed with the scalar product~$(y,z)_V\coloneqq\langle Ay,z\rangle_{V',V}$,
which again makes~$V$ a Hilbert space.
Then, necessarily $A\colon V\to V'$ is an isometry.
The domain of~$A$ in~$H$ is denoted by
$\rmD(A)\coloneqq\{z\in H\mid Az\in H\}$
and it is  endowed with the scalar product
$(z,w)_{\rmD(A)}\coloneqq(Az,Aw)_H,$
which defines a norm equivalent to the graph norm.
\begin{assumption}\label{A:Arc}
We have that~$A_{\rm rc}\in\clC([0,\infty),\clL(V,H)+\clL(H,V'))$ and that it is bounded, $\sup\limits_{t\ge0}\norm{A_{\rm rc}(t)}{\clL(V,H)+\clL(H,V')}\eqqcolon C_{\rm rc}<\infty$.
\end{assumption}

The following assumptions involve the given control and output operators, $B$ and~$W$.
\begin{assumption}\label{A:Kstab}
There is a feedback-input operator $K\in\clL(H,\bbR^m)$ such that $\clA_{BK}\coloneqq-A-A_{\rm rc}+BK\in\clL(V,V')$ is~$(D_1,\mu_1)$-stable, for some~$D_1\ge1$ and~$\mu_1>0$.
\end{assumption}

\begin{assumption}\label{A:detect}
There is an output-injection  operator~$L\in\clL(\bbR^s,H)$ such that $\clA_{LW}\coloneqq-A- A_{\rm rc}(t)+LW$ is $(C,0)$-stable, for some~$C\ge1$.
\end{assumption}

\subsection{Auxiliary results}\label{sS:Aux-nominal}
We consider the dynamics under a general forcing~$f$,
\begin{equation}\label{sys-yf} 
 \dot y +Ay+ A_{\rm rc}y =f,\qquad y(0)= y_0\in H,
 \end{equation}
for time~$t\ge0$. Let us denote~$I_s^T\coloneqq(s,s+T)$, and for two given Banach spaces~$X,Y$, the Bochner (sub)spaces
\begin{align}\notag 
L^p_{\rm loc}(\bbR_+;X)&\coloneqq\{f\mid\,  f\rest{I_0^T}\in L^p(I_0^T;X),\forall T>0 \},\notag\\
\bbW(I_0^T;X,Y)& \coloneqq\{f\in L^2(I_0^T;X)\mid\; \dot f\in L^2(I_0^T;Y)\},\notag\\
\bbW_{\rm loc}(\bbR_+;X,Y)& \coloneqq\{f\mid\; f\rest{I_0^T}\in \bbW(I_0^T;X,Y),\forall T>0\}.\notag
 \end{align}
\begin{lemma}\label{L:regSol}
If Assumptions~\ref{A:HV}-\ref{A:Arc} hold true and ~$f\in L^2_{\rm loc}(\bbR_+;V')$, then the solution of~\eqref{sys-yf} satisfies, for any given~$s\ge0$ and~$T>0$,
\begin{align}\notag
\norm{y}{\clC([s,s+T];H)}^2&\le C_{0,T}\norm{y}{\bbW(I_s^T;V,V')}^2\\
&\le C_{T}\left(\norm{y(s)}{H}^2+\norm{f}{L^2_{\rm loc}(I_s^T;V')}^2\right),\notag
\end{align}
with constants~$C_{0,T}=\ovlineC{C_{\rm rc}}\ge0$ and~$C_T=\ovlineC{C_{\rm rc}}\ge1$ independent of~$(s,y_0)$. 
\end{lemma}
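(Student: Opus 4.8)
The second inequality is the basic parabolic energy estimate, and the first is the continuous embedding $\bbW(I_s^T;V,V')\hookrightarrow\clC([s,s+T];H)$. I understand the solution in the usual variational (Lions) sense; its existence and uniqueness in $\bbW(I_s^T;V,V')$ follow from a standard Galerkin argument, so it suffices to derive the a~priori estimates on Galerkin approximations and pass to the limit. The key structural point is that all constants depend only on~$C_{\rm rc}$ and~$T$, not on~$s$ or~$y_0$: since~$A$ is time-independent and, by Assumption~\ref{A:Arc}, $\norm{A_{\rm rc}(t)}{\clL(V,H)+\clL(H,V')}\le C_{\rm rc}$ uniformly in~$t$, the problem has the same structure after translating the time origin to~$t=s$.

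For the energy estimate I would test~\eqref{sys-yf} with~$y(t)$. Because the scalar product of~$V$ is $(y,z)_V=\langle Ay,z\rangle_{V',V}$, the diffusion term gives exactly the coercive contribution $\langle Ay,y\rangle_{V',V}=\norm{y}{V}^2$; together with $\langle\dot y,y\rangle_{V',V}=\tfrac12\tfrac{\rmd}{\rmd t}\norm{y}{H}^2$ (valid for $y\in\bbW$) this yields
\begin{equation}\notag
\tfrac12\tfrac{\rmd}{\rmd t}\norm{y}{H}^2+\norm{y}{V}^2=\langle f,y\rangle_{V',V}-\langle A_{\rm rc}y,y\rangle_{V',V}.
\end{equation}
The main technical point is the reaction--convection term: writing $A_{\rm rc}=A_{\rm rc}^{(1)}+A_{\rm rc}^{(2)}$ with $A_{\rm rc}^{(1)}\in\clL(V,H)$ and $A_{\rm rc}^{(2)}\in\clL(H,V')$, both contributions are bounded, using the continuity of $V\subseteq H$, by $\ovlineC{C_{\rm rc}}\norm{y}{V}\norm{y}{H}$. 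By Young's inequality these terms, and the term $\langle f,y\rangle_{V',V}$, are absorbed into $\tfrac12\norm{y}{V}^2$ at the cost of multiples of $\norm{y}{H}^2$ and $\norm{f}{V'}^2$, giving a differential inequality of the form
\begin{equation}\notag
\tfrac{\rmd}{\rmd t}\norm{y}{H}^2+\tfrac12\norm{y}{V}^2\le \ovlineC{C_{\rm rc}}\norm{y}{H}^2+2\norm{f}{V'}^2 .
\end{equation}
Dropping $\norm{y}{V}^2$ and applying Gr\"onwall's inequality on $I_s^T$ gives the pointwise bound $\sup_{t\in[s,s+T]}\norm{y(t)}{H}^2\le\ovlineC{C_{\rm rc}}\bigl(\norm{y(s)}{H}^2+\norm{f}{L^2(I_s^T;V')}^2\bigr)$; reinserting this into the integrated differential inequality then controls $\int_s^{s+T}\norm{y}{V}^2\,\rmd t$ by the same right-hand side.

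It remains to bound~$\dot y$ and to prove the embedding. From the equation $\dot y=f-Ay-A_{\rm rc}y$, and since $A\colon V\to V'$ is an isometry, $\norm{Ay}{V'}=\norm{y}{V}$, while $\norm{A_{\rm rc}y}{V'}\le\ovlineC{C_{\rm rc}}\norm{y}{V}$ by the same splitting as above; hence $\int_s^{s+T}\norm{\dot y}{V'}^2\,\rmd t\le\ovlineC{C_{\rm rc}}\bigl(\norm{f}{L^2(I_s^T;V')}^2+\int_s^{s+T}\norm{y}{V}^2\,\rmd t\bigr)$, which combined with the $L^2(V)$ bound yields the $\bbW(I_s^T;V,V')$ estimate, i.e.\ the second inequality. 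For the first inequality I would use the identity $\norm{y(t)}{H}^2=\norm{y(r)}{H}^2+2\int_r^t\langle\dot y,y\rangle_{V',V}\,\rmd\sigma$, valid for $y\in\bbW$; bounding $\lvert\langle\dot y,y\rangle_{V',V}\rvert\le\tfrac12(\norm{\dot y}{V'}^2+\norm{y}{V}^2)$ and averaging over $r\in(s,s+T)$ (using $V\subseteq H$ to control $T^{-1}\int_s^{s+T}\norm{y(r)}{H}^2\,\rmd r$ by $\norm{y}{\bbW(I_s^T;V,V')}^2$) gives $\norm{y(t)}{H}^2\le C_{0,T}\norm{y}{\bbW(I_s^T;V,V')}^2$ uniformly in~$t$.

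I expect the only delicate step to be the treatment of~$A_{\rm rc}$: because it lies in the sum space $\clL(V,H)+\clL(H,V')$ rather than in a single space, one must split it and verify that both pieces produce only lower-order terms, controllable by the coercive $\norm{y}{V}^2$ through Young's inequality. Everything else is bookkeeping, with the $(s,y_0)$-independence of the constants following from the uniform-in-$t$ bound of Assumption~\ref{A:Arc} and the linear dependence of the estimates on the data.
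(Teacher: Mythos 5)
Your proposal is correct and takes essentially the same route as the paper, which does not write out the argument but simply invokes the standard Faedo--Galerkin method (citing Temam, Ch.~1, Sects.~1.3--1.4): derive the a~priori energy estimates on Galerkin approximations and pass to the limit. All the key points are handled properly --- the splitting of $A_{\rm rc}$ in the sum space $\clL(V,H)+\clL(H,V')$ with both pieces absorbed by the coercive term via Young's inequality, Gr\"onwall on $I_s^T$, the bound on $\dot y$ using that $A\colon V\to V'$ is an isometry, and the quantitative embedding $\bbW(I_s^T;V,V')\hookrightarrow\clC([s,s+T];H)$ --- with constants depending only on $C_{\rm rc}$ and $T$, uniformly in $(s,y_0)$, exactly as the lemma requires.
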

The proof of Lemma~\ref {L:regSol} is standard, by following the Faedo--Galerkin method~\cite[Ch.~1, Sects.~1.3--1.4]{Temam01} (cf.~\cite[Sect.~3.4]{Rod21-jnls}).

Now, we consider a family of systems of type~\eqref{sys-yf},  with vanishing external forcing~$f$, indexed by the initial time~$s\ge0$ as follows,
\begin{equation}\label{sys-yvaric-s}\stepcounter{equation} \tag{\theequation.[s]}
 \dot z +Az+ A_{\rm rc}z =0,\quad z(s)= w(s),\quad t\ge s\ge 0,
 \end{equation}
where the initial state, at time~$s\ge0$, is given by the value of a function~$w(s)\in H$ at time~$s$. The following result will be essential for the proof of the main Theorem~\ref{T:main}, in particular, to derive appropriate regularity for the input.
\begin{lemma}\label{L:regSolvaric}
Let Assumptions~\ref{A:HV}--\ref{A:Arc} hold true, let~$\tau>0$ and~$w\in\clC([0,\infty),H)$, and let~$z=z_s(t)$ denote the solution of~\eqref{sys-yvaric-s}, $t\ge s$. Then, the function~$w^\tau(s)\coloneqq z_s(s+\tau)$ satisfies~$w^\tau\in\clC([0,\infty),H)$.
\end{lemma}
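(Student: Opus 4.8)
The plan is to work with the two-parameter solution family and to reduce the claim to a uniform bound together with a strong-continuity property. For $t\ge r\ge0$ and $\zeta\in H$, let $S(t,r)\zeta\in H$ be the value at time~$t$ of the solution of $\dot p+Ap+A_{\rm rc}p=0$, $p(r)=\zeta$. By Lemma~\ref{L:regSol} (with $f=0$) this solution exists, lies in $\clC([r,\infty),H)\cap\bbW_{\rm loc}$, and is unique (apply the estimate to the difference of two solutions, which has vanishing data); moreover $S(t,\sigma)S(\sigma,r)=S(t,r)$ for $t\ge\sigma\ge r$, again by uniqueness, and there is $C_*=C_*(\tau)\ge1$ with $\norm{S(t,r)}{\clL(H)}\le C_*$ whenever $0\le t-r\le2\tau$. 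With this notation $z_s(t)=S(t,s)w(s)$, so $w^\tau(s)=S(s+\tau,s)w(s)$ is well defined in~$H$, and I must show that $s\mapsto S(s+\tau,s)w(s)$ is $H$-continuous.

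Fix $s_0\ge0$ and restrict to $|s-s_0|\le\tau$. Writing
\begin{equation*}
w^\tau(s)-w^\tau(s_0)=S(s+\tau,s)\bigl(w(s)-w(s_0)\bigr)+\bigl(S(s+\tau,s)-S(s_0+\tau,s_0)\bigr)w(s_0),
\end{equation*}
the first summand has $H$-norm $\le C_*\norm{w(s)-w(s_0)}{H}\to0$, since $w\in\clC([0,\infty),H)$. For the second it suffices to prove, for $\zeta\coloneqq w(s_0)$, the strong continuity $S(s+\tau,s)\zeta\to S(s_0+\tau,s_0)\zeta$ in~$H$ as $s\to s_0$. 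Splitting off the final-time argument,
\begin{equation*}
S(s+\tau,s)\zeta-S(s_0+\tau,s_0)\zeta=\bigl(S(s+\tau,s)-S(s+\tau,s_0)\bigr)\zeta+\bigl(S(s+\tau,s_0)-S(s_0+\tau,s_0)\bigr)\zeta,
\end{equation*}
the last difference tends to~$0$ because $t\mapsto S(t,s_0)\zeta$ is continuous on $[s_0,\infty)$ (Lemma~\ref{L:regSol}). For the first difference I use the cocycle identity: when $s\ge s_0$ it equals $S(s+\tau,s)\bigl(\zeta-S(s,s_0)\zeta\bigr)$, of norm $\le C_*\norm{S(s,s_0)\zeta-\zeta}{H}$, where $S(s,s_0)\zeta\to\zeta$ as $s\downarrow s_0$ is once more continuity in the final time; when $s<s_0$ it equals $S(s+\tau,s_0)\bigl(S(s_0,s)\zeta-\zeta\bigr)$, of norm $\le C_*\norm{S(s_0,s)\zeta-\zeta}{H}$. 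Thus everything reduces to the single statement
\begin{equation*}
\norm{S(s_0,s)\zeta-\zeta}{H}\to0\quad\text{as }s\uparrow s_0,\qquad\text{for each }\zeta\in H,
\end{equation*}
which I denote by~$(\star)$.

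The hard part is~$(\star)$: it is the parabolic initial-layer estimate, and it is where coercivity must be used. Writing $p(\cdot)\coloneqq S(\cdot,s)\zeta$, one has $S(s_0,s)\zeta-\zeta=\int_s^{s_0}\dot p\,\rmd r$, and since $Ap\in L^2(V')$ only, this converges to~$0$ in~$V'$ but not obviously in~$H$. I would argue by weak convergence plus convergence of norms. First, by Cauchy--Schwarz and the $\bbW$-bound of Lemma~\ref{L:regSol},
\begin{equation*}
\norm{S(s_0,s)\zeta-\zeta}{V'}\le(s_0-s)^{1/2}\norm{\dot p}{L^2(s,s_0;V')}\le C\,(s_0-s)^{1/2}\norm{\zeta}{H}\to0;
\end{equation*}
together with the uniform bound $\norm{S(s_0,s)\zeta}{H}\le C_*\norm{\zeta}{H}$ and the density of~$V$ in~$H$, this forces $S(s_0,s)\zeta\rightharpoonup\zeta$ weakly in~$H$ (test the $V'$-limit against $V$). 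Second, integrating the energy identity $\tfrac{\rmd}{\rmd r}\norm{p}{H}^2=-2\norm{p}{V}^2-2\langle A_{\rm rc}p,p\rangle_{V',V}$ over $(s,s_0)$ and discarding the nonpositive term $-2\norm{p}{V}^2$ gives
\begin{equation*}
\norm{S(s_0,s)\zeta}{H}^2\le\norm{\zeta}{H}^2+2\!\int_s^{s_0}\!\bigl|\langle A_{\rm rc}p,p\rangle_{V',V}\bigr|\,\rmd r,
\end{equation*}
where, by Assumption~\ref{A:Arc}, the remainder is $\le C\,(s_0-s)^{1/2}\norm{p}{L^2(s,s_0;V)}\norm{p}{\clC([s,s_0];H)}\to0$ (both norms are $O(\norm{\zeta}{H})$ by Lemma~\ref{L:regSol}). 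Hence $\limsup_{s\uparrow s_0}\norm{S(s_0,s)\zeta}{H}\le\norm{\zeta}{H}$, and in the Hilbert space~$H$ weak convergence together with this bound on the norms yields strong convergence, proving~$(\star)$. Combining the three reductions gives $w^\tau(s)\to w^\tau(s_0)$, i.e. $w^\tau\in\clC([0,\infty),H)$. Every ingredient other than~$(\star)$ is soft (continuity of~$w$, continuity of a fixed solution in its final time, the uniform bound, and the cocycle law); the genuine work, and the only place the parabolic structure enters, is the strong initial-time continuity~$(\star)$.
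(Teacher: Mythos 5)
Your proof is correct, but it follows a genuinely different route from the paper's. The paper shifts both trajectories to a common time interval, setting $\underline{z}_s(t)\coloneqq z_s(t+s)$, so that the difference $\eta=\underline{z}_s-\underline{z}_r$ solves a single equation on $(0,\tau)$ forced by $(\underline A_{\rm rc,s}-\underline A_{\rm rc,r})\underline{z}_r$; two applications of Lemma~\ref{L:regSol} then reduce everything to the continuity of $w$ and the (locally uniform) continuity of $t\mapsto A_{\rm rc}(t)$ from Assumption~\ref{A:Arc}, which makes the forcing term small as $r\to s$. You instead set up the two-parameter family $S(t,r)$, use the cocycle identity to isolate the genuinely nontrivial statement $(\star)$ of strong continuity with respect to the \emph{initial} time at fixed final time, and prove $(\star)$ softly: the $V'$-estimate plus the uniform $H$-bound gives weak convergence, the energy identity gives $\limsup\norm{S(s_0,s)\zeta}{H}\le\norm{\zeta}{H}$, and the Hilbert-space Radon--Riesz argument upgrades this to strong convergence. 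A notable consequence of your route is that the time-continuity of $A_{\rm rc}$ is never used—only the uniform bound $C_{\rm rc}$ and the coercive energy structure—so your argument would survive with coefficients merely bounded and measurable in time, whereas the paper's proof leans essentially on Assumption~\ref{A:Arc}'s continuity. What the paper's approach buys in exchange is brevity and a quantitative modulus: inequality~\eqref{difwtausr} bounds $\norm{w^\tau(s)-w^\tau(r)}{H}$ explicitly by the moduli of continuity of $w$ and of $A_{\rm rc}$, while your weak-compactness step at $(\star)$ is intrinsically qualitative and yields no rate. Your reductions (the restriction $|s-s_0|\le\tau$ keeping the cocycle splittings legal, and the uniformity in the initial time of the constants in Lemma~\ref{L:regSol}) are all handled correctly.
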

\begin{proof}
We start by defining $\underline{z}_s(t)\coloneqq z_s(t+s)$, for~$t\ge0$, and by noticing that, for~$(s,r)\in[0,\infty)$,
    \[
    w^\tau(s)-w^\tau(r)= z_s(s+\tau)-z_r(r+\tau)= \underline{z}_s(\tau)-\underline{z}_r(\tau).
    \]
    Next, for the difference~$\eta\coloneqq \underline{z}_s-\underline{z}_r$ we have the dynamics
    \[
    \dot\eta +A\eta+\underline A_{\rm rc,s}\eta=(\underline A_{\rm rc,s}-\underline A_{\rm rc,r})\underline{z}_r,\qquad t\ge0,
    \]
    where
    \[
    \underline A_{\rm rc,s}(t)\coloneqq A_{\rm rc}(t+s),\mbox{ for each } s\ge0\mbox{ and all } t>0.
    \]
    By Lemma~\ref{L:regSol} we find
    that
    \begin{align*}
     \norm{\eta(\tau)}{H}^2&\le C_{\tau}\left( \norm{\eta(0)}{H}^2+\norm{(\underline A_{\rm rc,s}-\underline A_{\rm rc,r})\underline{z}_r}{L^2((0,\tau),V')}^2\right)\\
     &\hspace{-.5em}\le C_{\tau}\left( \norm{\eta(0)}{H}^2+\dnorm{\underline A_{\rm rc,s}-\underline A_{\rm rc,r}}{}^2\norm{\underline{z}_r}{L^2((0,\tau),V)}^2\right)
      \end{align*}
      with~$\dnorm{g}{}\coloneqq\max_{t\in[0,\tau]}\norm{g(t)}{\clL(H,V')+\clL(V,H))}$. Using Lemma~\ref{L:regSol} again, we find
     \begin{align}
   &\varTheta\coloneqq\norm{w^\tau(s)-w^\tau(r)}{H}^2=\norm{\eta(\tau)}{H}^2\label{difwtausr}\\
     &\hspace{0em}\le C_{\tau}\left( \norm{\eta(0)}{H}^2+C_{0,\tau}\dnorm{\underline A_{\rm rc,s}-\underline A_{\rm rc,r}}{}^2\norm{z_r(r)}{H}^2\right)\notag\\
     &\hspace{0em}\le\! C_{\tau}\!\left( \norm{w(s)-w(r)}{H}^2+C_{0,\tau}\!\dnorm{\underline A_{\rm rc,s}-\underline A_{\rm rc,r}}{}^2\norm{w(r)}{H}^2\right)\!.\notag
    \end{align}

Let us fix arbitrary~$s\ge0$ and~$\epsilon>0$.
By the continuity of~$w$ it follows that there exists~$\delta_1>0$ such that
\[
\norm{w(r)-w(s)}{H}^2\le (2C_\tau\epsilon^2+1)^{-1},\quad\mbox{for}\quad r\in\clB_{\delta_1}^+(s),
\]
where~$\clB_{\gamma}^+(c)\coloneqq(c-\gamma,c+\gamma)\cap[0,\infty)$ denotes the ball of~$[0,\infty)$, with center~$c$ and radius~$\gamma$. Hence, using~\eqref{difwtausr},
\begin{align*}
 \varTheta   &\le 2^{-1}\epsilon^2+C_{\tau}C_{0,\tau}\dnorm{\underline A_{\rm rc,s}-\underline A_{\rm rc,r}}{}^2(\norm{w(s)}{H}+1)^2,
    \end{align*}
    for all $r\in\clB_{\delta_1}^+(s)$.
By the continuity of~$A_{\rm rc}$, in Assumption~\ref{A:Arc}, there exists~$\delta\in(0,\delta_1)$ such that
\begin{align*}
&\dnorm{\underline A_{\rm rc,s}-\underline A_{\rm rc,r}}{}^2\\
&=\sup_{t\in[0,\tau]}\norm{\underline A_{\rm rc}(s+t)-\underline A_{\rm rc}(r+t)}{\clL(H,V')+\clL(V,H)}^2\\&\le (2C_{\tau}C_{0,\tau}(\norm{w(s)}{H}+1)^2\epsilon^2)^{-1},
\end{align*}
for all $r\in\clB_{\delta}^+(s)$.
Therefore, we arrive at
\begin{align*}
     \norm{w^\tau(s)-w^\tau(r)}{H}^2&\le \epsilon^2,\qquad\mbox{for all}\quad r\in\clB_{\delta}^+(s),
\end{align*}
showing the continuity of~$w^\tau$ at~$t=s$.
This finishes the proof, since~$s\ge0$ is arbitrary.
\end{proof}

Next, we consider the nominal controlled dynamics under a  perturbation~$g$, for~$t\ge 0$,
\begin{equation}\label{sys-yFg} 
 \dot y +Ay+ A_{\rm rc}y =B K y + g,\qquad y(0)= y_0.
 \end{equation}
 \begin{lemma}\label{L:pert-stable}
Let Assumptions~\ref{A:HV}-\ref{A:Kstab} hold true and let~$g\in L^p(\bbR_+;H)$, $p\in(1,\infty]$. Then,  the solution of~\eqref{sys-yFg} satisfies, for~$t\ge s\ge 0$,
\begin{equation}\notag
\norm{y(t)}{H}\le D_1\rme^{-\mu_1(t-s)}\norm{y(s)}{H}+C_p\norm{g}{L^p((s,\infty);H)},
\end{equation}
with~$C_p=\ovlineC{C_{\rm rc}}\ge0$ independent of~$(t,s,y_0)$.
\end{lemma}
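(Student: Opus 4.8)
The plan is to pass to the evolution-operator form of the dynamics and invoke the variation-of-constants (Duhamel) formula, the decay coming directly from Assumption~\ref{A:Kstab} and the forcing being controlled by a convolution estimate. Writing $\clA_{BK}=-A-A_{\rm rc}+BK$, equation~\eqref{sys-yFg} reads $\dot y=\clA_{BK}y+g$, $y(0)=y_0$. Since $BK$ (the composition of $K\in\clL(H,\bbR^m)$ with the actuator map) is a $t$-independent element of $\clL(H,H)\subseteq\clL(H,V')$, the operator $A_{\rm rc}-BK$ still satisfies Assumption~\ref{A:Arc}; hence Lemma~\ref{L:regSol}, applied with $A_{\rm rc}$ replaced by $A_{\rm rc}-BK$ and forcing $g\in L^p(\bbR_+;H)\subseteq L^2_{\rm loc}(\bbR_+;V')$, yields a unique solution $y\in\bbW_{\rm loc}(\bbR_+;V,V')\cap\clC([0,\infty);H)$.

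Next I would introduce the two-parameter evolution operator $S(t,s)\in\clL(H,H)$ of the homogeneous problem $\dot z=\clA_{BK}z$, $z(s)=z_s$, so that $z(t)=S(t,s)z_s$. The $(D_1,\mu_1)$-stability in Assumption~\ref{A:Kstab} (Definition~\ref{D:clAstab}) gives $\norm{S(t,s)z_s}{H}\le D_1\rme^{-\mu_1(t-s)}\norm{z_s}{H}$ for attainable states $z_s$, and by density of the attainable set in $H$ together with boundedness of $S(t,s)$ this extends to $\dnorm{S(t,s)}{\clL(H,H)}\le D_1\rme^{-\mu_1(t-s)}$ for all $t\ge s\ge0$. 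The solution of~\eqref{sys-yFg} then admits the representation
\begin{equation}\notag
y(t)=S(t,s)y(s)+\int_s^tS(t,r)g(r)\,\rmd r,\qquad t\ge s\ge0,
\end{equation}
which is justified by checking that the right-hand side solves the same Cauchy problem from time $s$ and invoking uniqueness from Lemma~\ref{L:regSol}; the Bochner integral is well defined since $r\mapsto S(t,r)g(r)$ is measurable and dominated by $D_1\rme^{-\mu_1(t-r)}\norm{g(r)}{H}\in L^1((s,t))$. Taking $H$-norms yields
\begin{equation}\notag
\norm{y(t)}{H}\le D_1\rme^{-\mu_1(t-s)}\norm{y(s)}{H}+D_1\int_s^t\rme^{-\mu_1(t-r)}\norm{g(r)}{H}\,\rmd r.
\end{equation}

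It then remains to estimate the convolution term uniformly in $(t,s,y_0)$. With the conjugate exponent $p'\coloneqq\tfrac{p}{p-1}\in[1,\infty)$ (so $p'=1$ when $p=\infty$), Hölder's inequality gives
\begin{equation}\notag
\int_s^t\rme^{-\mu_1(t-r)}\norm{g(r)}{H}\,\rmd r\le\left(\int_s^t\rme^{-\mu_1p'(t-r)}\,\rmd r\right)^{1/p'}\norm{g}{L^p((s,\infty);H)}\le(\mu_1p')^{-1/p'}\norm{g}{L^p((s,\infty);H)},
\end{equation}
where I used $\int_s^t\rme^{-\mu_1p'(t-r)}\,\rmd r\le(\mu_1p')^{-1}$ (and, for $p=\infty$, the direct bound $\int_s^t\rme^{-\mu_1(t-r)}\,\rmd r\le\mu_1^{-1}$). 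This produces the claimed inequality with $C_p\coloneqq D_1(\mu_1p')^{-1/p'}=\ovlineC{C_{\rm rc}}$, independent of $t$, $s$, and $y_0$.

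The only genuinely delicate points are the two flagged en route: extending the decay estimate of Definition~\ref{D:clAstab} from the attainable set to all of $H$ (a density/continuity argument), and the rigorous justification of the Duhamel identity in the variational $(V,H,V')$ setting via the equivalence of mild and weak solutions. Both are standard, and the remaining convolution estimate is routine.
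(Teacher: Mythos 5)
Your proof is correct and follows essentially the same route as the paper's: Duhamel's formula combined with the $(D_1,\mu_1)$-stability from Assumption~\ref{A:Kstab}, followed by H\"older's inequality on the convolution term. The paper's proof is simply a condensed version of yours (it reads Definition~\ref{D:clAstab} as directly giving the evolution-operator bound for arbitrary starting times, so it skips your well-posedness and density digressions), and your constant $C_p=D_1(\mu_1 p')^{-1/p'}$ is in fact the correct evaluation of the H\"older bound, of which the paper's $D_1(p-1)(p\mu_1)^{-1}$ is a slightly imprecise simplification.
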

\begin{proof}
By the Duhamel formula, Assumption~\ref{A:Kstab}, and Definition~\ref{D:clAstab}, we have that
\begin{equation}\notag
\norm{y(t)}{H}\le D_1\rme^{-\mu_1(t-s)}\norm{z(s)}{H}+D_1{\textstyle\int_s^t}\rme^{-\mu_1(t-r)}\norm{g(r)}{H}\,\rmd r. 
\end{equation}
Further, by H\"older inequality, for the last term we find
\begin{align}\notag
&{\textstyle\int_s^t}\rme^{-\mu_1(t-r)}\norm{g(r)}{H}\,\rmd r\\
&\hspace{2em}\le({\textstyle\int_s^t}\rme^{-\frac{p}{p-1}\mu_1(t-r)}\,\rmd r)^\frac{p-1}{p}({\textstyle\int_s^t}\norm{g(r)}{H}^p\,\rmd r)^\frac1p\notag\\
&\hspace{2em}<\tfrac{p-1}{p\mu_1}\norm{g}{L^p((s,t);H)}. \notag
\end{align}
The result follows with~$C_p=D_1(p-1)(p\mu_1)^{-1}$.
\end{proof}

\section{Destabilizing effect of delayed feedback inputs}\label{S:dest-delayK}
We give an example for which Assumptions~\ref{A:HV}--\ref{A:Kstab} are satisfied and where there exists a threshold~$\tau_*>0$ so that no linear nominal stabilizing feedback~$K$ will remain stabilizing if subject to a time-delay~$\tau\ge\tau_*$.
This fact is not explicitly written in the literature. It is known that a fixed feedback input stabilizing  the nominal system can become destabilizing if subject to a large enough delay.

Let us set~$V=H=\bbR$, $A=\rho\Id$, $\Psi_1=1$, and~$B=\Id$, with~$\rho>0$. Then, the system
 \begin{align}\notag
 \dot y = \rho y,&
\qquad y(0)=y_{0},
\end{align}
is unstable and, in this case, exponentially stabilizing linear feedback input operators are necessarily of the form~$K=\kappa_0\Id$ with~$\kappa_0<-\rho$. Thus, let us fix~$\kappa<-\rho$. From~$BKy=\kappa y$, we arrive at the closed-loop nominal exponentially stable system
 \begin{align}\notag
 \dot y = \rho y+\kappa y,&
\qquad y(0)=y_{0}.
\end{align}
In case the same feedback input~$u(t)=\kappa y(t)$ is delayed by time~$\tau$, we will have the dynamics
 \begin{align}\label{sys-y-ex-Ktau} 
 \dot y(t) = \begin{cases}
\rho y(t),&\mbox{for }t\in[0,\tau),\\
\rho y(t) +\kappa y(t-\tau),&\mbox{for }t\in[\tau,\infty).
 \end{cases}
\end{align}
Next, we show that~\eqref{sys-y-ex-Ktau} is unstable if~$\tau\ge\rho^{-1}$. Let 
\begin{equation}\label{ode-intro-tauhat}
\widehat\tau\coloneqq\widehat\tau(\rho,\kappa)\coloneqq (\kappa^2-\rho^2)^{-\frac12}\arccos(-\tfrac\rho\kappa).
\end{equation}
From~$\rho>0$, we obtain $\kappa+\rho<0$ and~$\kappa-\rho<0$.
Thus we can apply the result in~\cite[Sect.4.5, Thm.~4.7(c)]{Smith11} to conclude that~\eqref{sys-y-ex-Ktau} is unstable for~$\tau>\widehat\tau$ and asymptotically stable for~$0\le\tau<\widehat\tau$. 
Next, since~$\rho>0$ and~$\kappa<-\rho$, we can write~$\kappa = -\gamma\rho$, with~$\gamma>1$ and, from
\begin{equation}\notag
 \widehat\tau(\rho,-\gamma\rho)= (\gamma^2-1)^{-\frac12}\rho^{-1}\arccos(\gamma^{-1})\ge 0,
\end {equation}
we obtain~$\lim_{\gamma\to\infty} \widehat\tau(\rho,-\gamma\rho)= 0$ and, by writing~$\gamma^{-1}\eqqcolon\cos(\theta)$, $\theta\in(0,\frac\pi2)$, we find
\begin{align}
\lim_{\gamma\searrow1} \widehat\tau(\rho,-\gamma\rho)&= \rho^{-1}\lim_{\theta\searrow0}\left((\cos(\theta)^{-2}-1)^{-\frac12}\theta\right)\notag\\
&= \rho^{-1}\lim_{\theta\searrow0}\left(\cos(\theta)\sin(\theta)^{-1}\theta\right)=\rho^{-1},\notag
\end{align}
and 
\begin{align}
\vartheta\coloneqq\rho^{-1}\tfrac{\rmd}{\rmd\gamma}\widehat\tau(\rho,-\gamma\rho)&=-\gamma(\gamma^2-1)^{-\frac32}\arccos(\gamma^{-1})\notag\\
&\quad+(\gamma^2-1)^{-\frac12}\gamma^{-2}(1-\gamma^{-2})^{-\frac12}\notag
\end{align}
which gives us
\begin{align}
\vartheta&=\gamma^{-1}(\gamma^2-1)^{-\frac32}\left(-\gamma^2\arccos(\gamma^{-1})+(\gamma^2-1)^{\frac12}\right)\notag\\
&=\gamma(\gamma^2-1)^{-\frac32}\left(-\arccos(\gamma^{-1})+\gamma^{-1}(1-\gamma^{-2})^{\frac12}\right..\notag
\end {align}
Since $\gamma^{-1}= \cos (\theta)$  it follows that 
\begin{align}
2\rho^{-1}\gamma^{-1}(\gamma^2-1)^{\frac32}\tfrac{\rmd}{\rmd\gamma}\widehat\tau(\rho,-\gamma\rho)&=-2\theta+2\cos(\theta)\sin(\theta)\notag\\
&\hspace{0em}=-2\theta+\sin(2\theta)<0.\notag
\end {align}
Therefore, we have
\begin{align}
&\lim_{\gamma\to\infty} \widehat\tau(\rho,-\gamma\rho)= 0,\quad\lim_{\gamma\searrow1} \widehat\tau(\rho,-\gamma\rho)= \rho^{-1}, \notag\\
\text{ and }&\quad\tfrac{\rmd}{\rmd\gamma}\widehat\tau(\rho,-\gamma\rho)<0,\quad\mbox{for }\gamma>1.\notag
\end {align}
In particular, we find~$\widehat\tau=\widehat\tau(\rho,\kappa)<\rho^{-1}$, independently of the chosen feedback gain~$\kappa<-\rho$.

\begin{remark}
The instability of~\eqref{sys-y-ex-Ktau}, for large~$\tau$, can be used to show the destabilizing effect of delayed feedback inputs for more general parabolic-like equations as well.
We give details in the  Appendix.
\end{remark}

\section{Stabilization}\label{S:stabil}

\subsection{Asymptotic null controllability}\label{sS:asynullct}
Let a feedback operator~$K$ as in Assumption~\ref{A:Kstab} be given  so that~$\clA_K\coloneqq -A-A_{\rm rc}+BK$ is~$(D_1,\mu_1)$-stable, for some~$D_1\ge1$ and~$\mu_1>0$. Then, since the delay~$\tau\ge0$ is known, we can easily find an input~$u(t)=u_{\rm olc}(t)$, such that its delayed version~$u_{\rm olc}^{[\tau]}(t)\coloneqq u_{\rm olc}(t-\tau)$ drives the state asymptotically to zero, exponentially fast. Namely, 
we solve the free-dynamics
 \begin{align}\label{ex-olp}
 &\dot p+Ap+A_{\rm rc} p =0,\quad p(0)= y_0,\quad\mbox{for } t\in(0,\tau),
 \end{align} 
 up to time~$\tau$ to find~$p(\tau)$, and set~$u_{\rm olc}(0)\coloneqq Kp(\tau)$. Then, we  solve the undelayed controlled system
\begin{align}\notag
 &\dot z+Az+A_{\rm rc}z =BKz,\quad z(\tau)= p(\tau),\quad\mbox{for } t\in(\tau,\infty)
 \end{align} 
and, finally, simply set
 \begin{align}\notag
 &u_{\rm olc}(t)\coloneqq Kz(t+\tau),&\mbox{for } t\in[0,\infty).
 \end{align} 

Then, the solution of
 \begin{align}\notag
 \begin{cases}
  \dot y+Ay+ A_{\rm rc}y =0,&\mbox{for }t<\tau,\\
    \dot y+Ay+ A_{\rm rc}y =Bu_{\rm olc}(t-\tau),&\mbox{for }t\ge\tau,
 \end{cases}
 \end{align} 
with initial state~$y(0)= y_0$, is given by
  \begin{align}\label{ex-oly}
&y(t)\coloneqq\begin{cases}
 p(t),&\mbox{for } t\in[0,\tau),\\
 z(t),&\mbox{for } t\in[\tau,\infty).
 \end{cases}
 \end{align} 
From~$\dot z=\clA_Kz$, we find~$\norm{y(t+\tau)}{H}^2=\norm{z(t+\tau)}{H}^2\le D_1\rme^{-\mu_1 t}\norm{y(\tau)}{H}^2\le D_1C_\tau\rme^{-\mu_1 t}\norm{y(0)}{H}^2$, with~$C_\tau$ given by Lemma~\ref{L:regSol}, used with~$(s,T,f)=(0,\tau,0)$. 
 
Though the delayed open-loop control~$u_{\rm olc}(t-\tau)$ above stabilizes the system, it is not robust against small errors. For example,  when solving numerically the free dynamics to compute~$p(\tau)$, we will,  obtain an approximation~$p(\tau)=y(\tau)-\varepsilon$ of the real state~$y(\tau)$. Hence, with the control input~$u=Kz$ above, the functions~$z$, $y$, and~$w\coloneqq y-z$, will satisfy, for~$t>\tau$,
\begin{align}\notag
 &\dot z+Az+ A_{\rm rc}z=BKz,\qquad z(\tau)=p(\tau),\notag
\\
 &\dot y+Ay+ A_{\rm rc}y =BKz,\qquad y(\tau)=y(\tau),\notag
\\
 &\dot w+Aw+ A_{\rm rc}w=0,\qquad\quad w(\tau)=\varepsilon,\notag
 \end{align}
The auxiliary state~$z(t)$ will converge exponentially to~$0$, but if the free dynamics is unstable,  then $w(t)$ will not necessarily converge asymptotically to~$0$. Thus, the real state~$y(t)=z(t)+w(t)$ will not necessarily converge asymptotically to~$0$.

Consequently,  the open-loop input above is not appropriate for practical applications,  since it does not take into account possible online state-measurement, estimation, and computation errors~$\varepsilon$, and consequently will not be able to respond to such errors, which can jeopardize its stabilizing properties.

\subsection{A predictor-based stabilizing feedback input}
Here, we shall construct an input that will be able to respond to state-measurement errors. In fact, we simply replace the open-loop control in Section~\ref{sS:asynullct} by a feedback control, constructed with the help of a predictor.
We commence by describing  the construction of  the feedback acting at $t=0$.  The state $y(\tau)$ can be computed by solving~\eqref{ex-olp} on $(0,\tau)$. Then,  the input is taken  depending  on the resulting~$y(\tau)$, thus, depending essentially only on~$y_0=y(0)$, leading to $u(0)= K_\tau(0,y(0))\coloneqq K y(\tau)$ as before.
Next, we proceed analogously for each time~$t>0$, resulting in a  control input  of the form ~$u(t)=K_\tau(t;y(t))$ depending on the state $ y(t)$.
Thus, we propose a state-feedback input~$u=u_\tau$ as follows,
\begin{subequations}\label{KfkY1}
\begin{align}
&u_\tau(s)\coloneqq 0,\quad\mbox{for}\quad s\in[-\tau,0);\label{KfkY1-Ks}\\
&u_\tau(t)\coloneqq  K_\tau(t;y(t)),\quad\mbox{for }t\ge0,\label{KfkY1-Kt}\\
\mbox{with}\quad& K_\tau(t,h)\coloneqq K\fkY(t,t+\tau, u_\tau^{[\tau]};h);
\intertext{where~$Y(t)\coloneqq\fkY(t_0,t,f;\fky)$ denotes the solution of}
&\dot Y+AY+A_{\rm rc}Y =Bf,\quad Y(t_0)=\fky,\label{KfkY-Y}
 \end{align}
 \end{subequations}
for~$t\ge t_0$.
Next, let us consider the system
 \begin{subequations}\label{sys-y-BKY1}
 \begin{align}
 &\dot y+Ay+ A_{\rm rc}y =0,\quad&& t<\tau,\\
 &\dot y+Ay+ A_{\rm rc}y =B K_\tau(t-\tau,y(t-\tau)),&& t>\tau,\\
  & y(0)= y_0,&&
 \end{align}
   \end{subequations}
with the time delayed input in~\eqref{KfkY1}, 
and the system
 \begin{subequations}\label{sys-w-BKY1}
 \begin{align}
 &\dot w+Aw+ A_{\rm rc}w =0,\quad&& t\in(0,\tau),\\
 &\dot w+Aw+ A_{\rm rc}w =B Kw,&& t>\tau,\\
  & w(0)= y_0,&&
 \end{align}
   \end{subequations}
where the nominal (undelayed) feedback input~$Kw$ is active only for time~$t\ge\tau$.

The next result concerns a property of the solutions of the  systems above, that seems to be well understood, though not explicitly written, in the literature.

\begin{lemma}
    The solutions of systems~\eqref{sys-y-BKY1} and~\eqref{sys-w-BKY1} coincide.
\end{lemma}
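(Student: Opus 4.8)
The plan is to show that the predictor is \emph{exact}, i.e.\ that the state feedback defined in~\eqref{KfkY1} satisfies $u_\tau(t)=Ky(t+\tau)$ along the trajectory $y$ of~\eqref{sys-y-BKY1}, and then to observe that with this identity the delayed closed loop~\eqref{sys-y-BKY1} collapses to the nominal closed loop~\eqref{sys-w-BKY1}. First I would rewrite the forcing in~\eqref{sys-y-BKY1} uniformly for all $t\ge0$ as $Bu_\tau^{[\tau]}(t)$: this is legitimate because $u_\tau^{[\tau]}(t)=u_\tau(t-\tau)=0$ for $t\in[0,\tau)$ by~\eqref{KfkY1-Ks}, so the two cases in~\eqref{sys-y-BKY1} are subsumed in the single equation $\dot y+Ay+A_{\rm rc}y=Bu_\tau^{[\tau]}$, $y(0)=y_0$, valid for every $t\ge0$.

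Fix $t\ge0$. The crucial observation is that the predictor $\fkY(t,t+\tau,u_\tau^{[\tau]};y(t))$ from~\eqref{KfkY-Y} solves, on the window $(t,t+\tau)$, \emph{exactly the same} inhomogeneous equation $\dot Y+AY+A_{\rm rc}Y=Bu_\tau^{[\tau]}$ that the true state obeys there, and with the \emph{same} initial value $Y(t)=y(t)$. By uniqueness of solutions to~\eqref{sys-yf} (Lemma~\ref{L:regSol}), the predictor coincides with $y$ on $[t,t+\tau]$, so that $\fkY(t,t+\tau,u_\tau^{[\tau]};y(t))=y(t+\tau)$ and hence $u_\tau(t)=K_\tau(t,y(t))=Ky(t+\tau)$ for every $t\ge0$. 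Consequently $u_\tau^{[\tau]}(t)=u_\tau(t-\tau)=Ky(t)$ for $t\ge\tau$, while $u_\tau^{[\tau]}(t)=0$ for $t\in[0,\tau)$. Substituting this back into the equation for $y$ gives $\dot y+Ay+A_{\rm rc}y=0$ for $t<\tau$ and $\dot y+Ay+A_{\rm rc}y=BKy$ for $t>\tau$, with $y(0)=y_0$, which is precisely system~\eqref{sys-w-BKY1}. Since $w$ is by definition the solution of that system, uniqueness (again Lemma~\ref{L:regSol}, now for the closed loop $\clA_K=-A-A_{\rm rc}+BK$) yields $y=w$.

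The step that requires care, and which I expect to be the main obstacle, is the apparent circularity: $u_\tau$ enters the definition of the predictor through its own delayed forcing $u_\tau^{[\tau]}$. This is resolved by causality: on the window $(t,t+\tau)$ the forcing $u_\tau^{[\tau]}(\Bigcdot)=u_\tau(\Bigcdot-\tau)$ only involves the values of $u_\tau$ on $(t-\tau,t)$, that is, inputs already committed before time $t$ and therefore fixed independently of the predictor $Y$ on $(t,t+\tau)$. Thus the comparison ``same equation, same forcing, same datum at $t$, hence same solution'' is legitimate and noncircular. To make the uniqueness invocation fully rigorous one should also check that $t\mapsto u_\tau(t)$ lies in a class for which~\eqref{KfkY-Y} is well posed; the continuity of the prediction map established in Lemma~\ref{L:regSolvaric}, together with the continuity of $y$ from Lemma~\ref{L:regSol}, supplies exactly the regularity of $u_\tau$ needed here.
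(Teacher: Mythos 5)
Your proposal is correct and follows essentially the same route as the paper's own proof: both establish the key identity $\fkY(t,t+\tau,u_\tau^{[\tau]};y(t))=y(t+\tau)$ (the paper via the Duhamel formula, you via uniqueness for the same inhomogeneous equation with the same data, which is the same argument), deduce $u_\tau(t)=Ky(t+\tau)$, and conclude that the delayed forcing equals $BKy(t)$ for $t>\tau$, i.e.\ the two systems coincide. Your explicit treatment of the causality (non-circularity) of the delayed forcing and of the regularity of $u_\tau$ is a careful elaboration of points the paper leaves implicit, not a different method.
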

\begin{proof}
    We simply observe that, from~$u_\tau^{[\tau]}(t)=K_\tau(t-\tau,y(t-\tau))$, together with~\eqref{sys-y-BKY1} and the Duhamel formula, it follows that
    $y(t+\tau)=\fkY(t,t+\tau,u_\tau^{[\tau]},y(t))$, which means that in~\eqref{KfkY1-Kt} we have~$u_\tau(t)=K_\tau(t;y(t))=Ky(t+\tau)$. Thus, for~$t>\tau$, in~\eqref{sys-y-BKY1} we find
    \[
    B  K_\tau(t-\tau,y(t-\tau))=B u_\tau(t-\tau)=BKy(t),
    \]
    which agrees with the input in~\eqref{sys-w-BKY1}, for~$t>\tau$.
\end{proof}

Now, since the input~$u_\tau^{[\tau]}(t)=Ky(t)$ takes the form of a stabilizing feedback input for time~$t\ge\tau$, it will be able to respond to small state measurement errors.

\subsection{A predictor- and state-estimate-based stabilizing feedback input}

In case the initial state $y(0)$ is unknown, it has to be replaced by an estimate $\widehat y(0) $ and the input~$u(t)$ has to be constructed from a state-estimate~$\widehat y(t)$.

We propose the analogue of the input in~\eqref{KfkY}, based on a prediction of (an estimate of) the state at time~$t+\tau$ constructed from an estimate~$\widehat y(t)$ for~$y(t)$. That is, the estimate-based feedback input~$u=\widehat u_\tau$ is taken as follows,
\begin{subequations}\label{KfkY}
\begin{align}
&\widehat u_\tau(s)\coloneqq 0,\quad\mbox{for}\quad s\in[-\tau,0),\label{KfkY-Ks}\\
&\widehat u_\tau(t)\coloneqq \widehat K_\tau(t;\widehat y(t)),\quad\mbox{for}\quad t\ge0,\label{KfkY-Kt}\\
\mbox{with}\quad& \widehat K_\tau(t,h)\coloneqq K\fkY(t,t+\tau,\widehat u_\tau^{[\tau]};h).
\end{align}
 \end{subequations}
Thus ~$\fkY(t,t+\tau,\widehat u_\tau^{[\tau]};\widehat y(t))$ is a prediction of~$y(t+\tau)$ at time~$t+\tau$, based on the estimate~$\widehat y(t)$ of~$y(t)$, at time~$t$.

Next, we analyze the robustness of the corresponding system,  where~$\widehat y$ is provided by a Luenberger observer,  
 \begin{subequations}\label{sys-cl-BKY}
 \begin{align}
 \dot y+Ay+ A_{\rm rc}y &=0,&&\hspace{-.5em} t<\tau,\label{sys-cl-BKY-y1}\\
 \dot y+Ay+ A_{\rm rc}y &=B\widehat K_\tau(t-\tau,\widehat y(t-\tau)),&&\hspace{-.5em} t>\tau,\label{sys-cl-BKY-y2}\\
 \dot{\widehat y}+A\widehat y+ A_{\rm rc}\widehat y &=0,&&\hspace{-.5em} t<\tau,\label{sys-cl-BKY-haty1}\\
 \dot{\widehat y}+A\widehat y+ A_{\rm rc}\widehat y &=B\widehat K_\tau(t-\tau,\widehat y(t-\tau))&&\hspace{-.5em}\notag\\
 &\quad+L(W\widehat y-Wy),&&\hspace{-.5em} t>\tau,\label{sys-cl-BKY-haty2}\\
   y(0)= y_0,\quad\;\;&\quad\widehat y(0)= \widehat y_0,&&\hspace{-.5em}\label{sys-cl-BKY-ics}
 \end{align}
   \end{subequations}
with the input in~\eqref{KfkY}, delayed by time~$\tau$. We recall (cf.~\eqref{observer-intro}) that~$Wy$ represents the output of sensor measurements at our disposal, $L$ represents an output-injection operator, and~$\widehat y_0$ is an initial guess we might have for the unknown initial state~$y_0$.
It will  be convenient to express $u_\tau^{[\tau]}$ in detailed manner as 
\begin{align}\label{eq:kk1}
& \widehat u_\tau^{[\tau]}(t)= \widehat u_\tau(t-\tau)=K\fkY(t-\tau,t,\widehat u_\tau^{[\tau]}\rest{(t-\tau,t)};\widehat y(t-\tau))\notag\\
 &=K\fkY(t-\tau,t,\widehat u_\tau\rest{(t-2\tau,t-\tau)};\widehat y(t-\tau)),\text{ for } t >\tau,\hspace{-.5em}
 \end{align}
where $\widehat u_\tau(t)=0$  for $t<0$.
It will also be convenient to introduce the extension by zero as follows:
 \begin{align}
& \fkE_0\colon L^2(\bbR_+,X)\to L^2((-\tau,\infty),X),\notag\\
 &\fkE_0 f(t)\coloneqq\begin{cases}
 0,&\mbox{if }t\in(-\tau,0);\\ f(t),&\mbox{if }t>0.
 \end{cases}\notag
\end{align}

\begin{theorem}\label{T:main}
Let Assumptions~\ref{A:HV}--\ref{A:detect} hold true. Then, the solution of~\eqref{sys-cl-BKY} with the state-estimate based input~$\widehat u_\tau$ in~\eqref{KfkY} satisfies, for~$ t\ge s\ge0$,
 \begin{align}
  \norm{y(t)}{H} &\le C_1\rme^{-\mu_1 (t-s)}\norm{y(s)}{H}\notag\\
  &\quad+ C_2\norm{ \fkE_0(\widehat y-y)}{L^\infty((s-\tau, t-\tau);H)},\label{estT:main}
\intertext{where~$(D_1,\mu_1)$ is as in Assumption~\ref{A:Kstab}, and~$C_1\ge1$ and~$C_2\ge0$ are of the form}
&\hspace{-3em}C_1=\ovlineC{D_1,C_{\rm rc},\tau,\mu_1},\quad C_2=\ovlineC{C_{\rm rc},\tau,D_1\mu_1^{-1},\norm{BK}{\clL(H)}}.\notag
\end{align} 
Furthermore,~$\widehat u_\tau\in L^\infty(\bbR_+,\bbR^m) \cap C([0,\infty),\bbR^m)$.
\end{theorem}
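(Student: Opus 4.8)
\emph{Plan.} I would transform the delayed closed loop \eqref{sys-cl-BKY} into the $(D_1,\mu_1)$-stable nominal system $\clA_{BK}\coloneqq-A-A_{\rm rc}+BK$ perturbed by a forcing that is controlled by the estimation error $e\coloneqq\widehat y-y$ evaluated at the delayed time, and then invoke the perturbation estimate of Lemma~\ref{L:pert-stable}. First I record the error dynamics: subtracting \eqref{sys-cl-BKY-y1},\eqref{sys-cl-BKY-y2} from \eqref{sys-cl-BKY-haty1},\eqref{sys-cl-BKY-haty2} shows that $e$ solves the free equation $\dot e+Ae+A_{\rm rc}e=0$ for $t<\tau$ and $\dot e=\clA_{LW}e$ for $t>\tau$. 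Hence, by Lemma~\ref{L:regSol} and the $(C,0)$-stability in Assumption~\ref{A:detect}, $e\in\clC([0,\infty),H)$ is bounded, with $\norm{e(t)}{H}\le C\sqrt{C_\tau}\,\norm{\widehat y_0-y_0}{H}$ for $t\ge\tau$.

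The crux is a predictor identity. Fix $t>\tau$. Since $\widehat u_\tau^{[\tau]}$ vanishes on $(0,\tau)$, the true state solves $\dot Y+AY+A_{\rm rc}Y=B\widehat u_\tau^{[\tau]}$ on all of $(t-\tau,t)$ with datum $y(t-\tau)$, i.e.\ $y(t)=\fkY(t-\tau,t,\widehat u_\tau^{[\tau]};y(t-\tau))$. The prediction in \eqref{eq:kk1} carries the same forcing but the datum $\widehat y(t-\tau)$, so linearity of the solution map gives
\begin{equation}\notag
\fkY(t-\tau,t,\widehat u_\tau^{[\tau]};\widehat y(t-\tau))=y(t)+\fkY(t-\tau,t,0;e(t-\tau)).
\end{equation}
Applying $K$ and using \eqref{eq:kk1} yields $\widehat u_\tau^{[\tau]}(t)=Ky(t)+K\fkY(t-\tau,t,0;e(t-\tau))$, whence, for $t>\tau$,
\begin{equation}\notag
\dot y=\clA_{BK}y+g,\qquad g(t)\coloneqq BK\,\fkY(t-\tau,t,0;e(t-\tau)).
\end{equation}
By Lemma~\ref{L:regSol} (free dynamics over a window of length $\tau$) we have $\norm{g(t)}{H}\le\norm{BK}{\clL(H)}\sqrt{C_\tau}\,\norm{e(t-\tau)}{H}$.

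Next I derive \eqref{estT:main} from the Duhamel representation of $y$ against the stable $\clA_{BK}$. For $s\ge\tau$, arguing as in the proof of Lemma~\ref{L:pert-stable} but integrating only up to $t$,
\begin{align}\notag
\norm{y(t)}{H}&\le D_1\rme^{-\mu_1(t-s)}\norm{y(s)}{H}+D_1{\textstyle\int_s^t}\rme^{-\mu_1(t-r)}\norm{g(r)}{H}\,\rmd r\\
&\le D_1\rme^{-\mu_1(t-s)}\norm{y(s)}{H}+\tfrac{D_1}{\mu_1}\norm{BK}{\clL(H)}\sqrt{C_\tau}\,\norm{\fkE_0 e}{L^\infty((s-\tau,t-\tau);H)};\notag
\end{align}
restricting the integral to $(s,t)$ is exactly what produces the upper limit $t-\tau$ (the integrand at $r$ sees only $e(r-\tau)$). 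For $0\le s<\tau$ I split at $\tau$: if $t\le\tau$ the input vanishes and Lemma~\ref{L:regSol} gives the bound with no error term, while for $t>\tau$ one has $\norm{y(\tau)}{H}\le\sqrt{C_\tau}\,\norm{y(s)}{H}$ and concatenation with the previous bound from $\tau$ to $t$ (absorbing $\rme^{-\mu_1(t-\tau)}\le\rme^{\mu_1\tau}\rme^{-\mu_1(t-s)}$ and using that $\fkE_0 e$ vanishes on $(s-\tau,0)$) yields \eqref{estT:main} with $C_1=\ovlineC{D_1,C_{\rm rc},\tau,\mu_1}$ and $C_2=\ovlineC{C_{\rm rc},\tau,D_1\mu_1^{-1},\norm{BK}{\clL(H)}}$.

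It remains to establish $\widehat u_\tau\in L^\infty(\bbR_+,\bbR^m)\cap\clC([0,\infty),\bbR^m)$. Rewriting the predictor identity as $\widehat u_\tau(s)=Ky(s+\tau)+K\fkY(s,s+\tau,0;e(s))$ for $s>0$, boundedness is immediate: $y$ is bounded by \eqref{estT:main} with $s=0$, $e$ is bounded by the first step, and $\fkY(s,s+\tau,0;\Bigcdot)$ is bounded by $\sqrt{C_\tau}$ uniformly in $s$. For continuity, $s\mapsto y(s+\tau)$ is continuous since $y\in\bbW_{\rm loc}(\bbR_+;V,V')\subset\clC([0,\infty),H)$, whereas $s\mapsto\fkY(s,s+\tau,0;e(s))$ is precisely the map $w^\tau$ of Lemma~\ref{L:regSolvaric} for the continuous datum $w=e$, hence continuous; this is exactly where Lemma~\ref{L:regSolvaric} is needed, since both the evolution window and the initial state move with $s$. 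Finally, at $s=0$ the one-sided limit equals $Ky(\tau)+K\fkY(0,\tau,0;e(0))=K\fkY(0,\tau,0;\widehat y_0)=\widehat u_\tau(0)$ (using $y(\tau)=\fkY(0,\tau,0;y_0)$, $e(0)=\widehat y_0-y_0$, and $\widehat u_\tau^{[\tau]}\rest{(0,\tau)}=0$), giving continuity on all of $[0,\infty)$. The main obstacle I anticipate is the bookkeeping of the predictor identity across the initial layer $t\in(\tau,2\tau)$ and the precise tracking of time windows so that the error enters only through $e(t-\tau)$ (aligning the bound with $L^\infty((s-\tau,t-\tau);H)$ rather than an integral to $+\infty$); the nonautonomy forces one to work with the two-parameter family $\fkY(\Bigcdot,\Bigcdot,0;\Bigcdot)$ instead of a semigroup, but the uniform window bounds of Lemma~\ref{L:regSol} and the continuity supplied by Lemma~\ref{L:regSolvaric} are exactly tailored to absorb this.
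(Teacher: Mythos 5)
Your proposal is correct and follows essentially the same route as the paper's proof: your identity $\fkY(t-\tau,t,\widehat u_\tau^{[\tau]};\widehat y(t-\tau))=y(t)+\fkY(t-\tau,t,0;e(t-\tau))$ is exactly the paper's decomposition into $\varkappa_1+\varkappa_2$, followed by the same Duhamel estimate against the $(D_1,\mu_1)$-stable operator $-A-A_{\rm rc}+BK$, the same case split at $t=\tau$, and the same use of Lemma~\ref{L:regSol} for window bounds and Lemma~\ref{L:regSolvaric} for continuity of the error-propagation term. Your handling of the initial layer $t\in(\tau,2\tau)$ and of the matching of $\widehat u_\tau$ at $s=0$ is in fact slightly more explicit than the paper's.
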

\begin{proof}
We start by observing that the estimate error~$\eta\coloneqq \widehat y-y$ solves
\begin{subequations}\label{sys-esterr-BKY}
\begin{align}
  &\dot{\eta}+A\eta+ A_{\rm rc}\eta =LW\eta,\quad&& t>0;\\
 & \eta(0)= \eta_0\coloneqq \widehat y_0- y_0.&&
 \end{align}    
\end{subequations}
By Assumption~\ref{A:detect} it follows that
\begin{align}\label{est.eta}
\norm{\eta(t)}{H}\le C\norm{\eta(s)}{H},\quad\mbox{for all}\quad t\ge s\ge0.
\end{align}

From \eqref{eq:kk1} we have that
 \begin{align}
 \widehat u_\tau^{[\tau]}(t)&= K\fkY(t-\tau,t,\widehat u_\tau\rest{(t-2\tau,t-\tau)};\widehat y(t-\tau)),
 \label{hatu=KPred}
 \end{align}
for~$t>\tau$, where $\widehat u_\tau(t)=0$  for $t<0$. We write
 \begin{align}
&\hspace{-5em}\fkY(t-\tau,t,\widehat u_\tau\rest{(t-2\tau,t-\tau)};\widehat y(t-\tau))=\varkappa_1+\varkappa_2;\label{Pred-vkappas}\\
\mbox{with}\quad\varkappa_1(t)&=\fkY(t-\tau,t,\widehat u_\tau\rest{(t-2\tau,t-\tau)}; y(t-\tau));\notag\\
\mbox{and}\quad\varkappa_2(t)&=-\fkY(t-\tau,t,\widehat u_\tau\rest{(t-2\tau,t-\tau)}; y(t-\tau))\notag\\
&\quad+\fkY(t-\tau,t,\widehat u_\tau\rest{(t-2\tau,t-\tau)};\widehat y(t-\tau)).\notag
 \end{align}
 Note that~$\varkappa_1(t)=y(t)$ and, consequently,
 \begin{equation}\label{dyn-y-vkappa2}
 \begin{cases}\dot y+Ay+ A_{\rm rc}y =0,\quad&\mbox{ } t<\tau;\\
 \dot y+Ay+ A_{\rm rc}y =BKy+BK\varkappa_2,\quad&\mbox{  } t>\tau.
 \end{cases}
 \end{equation}
Next, it is important to observe that~$\varkappa_2(t_0)=z(t_0)$, where for every given~$t_0\ge\tau$, the function~$z$ follows the dynamics as in~\eqref{sys-yvaric-s},
\begin{equation}\label{dyn.vkappa2}
\dot z+Az+A_{\rm rc}z =0,\quad z(t_0-\tau)=\eta(t_0-\tau),
  \end{equation}
for~$t>t_0-\tau$. By Lemmas~\ref{L:regSol} and~\ref{L:regSolvaric}
we have that
 \begin{align}
 \norm{\varkappa_2(t_0)}{H}\le C_\tau\norm{\eta(t_0-\tau)}{H},\quad\text{for all }t_0\ge\tau,
 \label{cont.vkappa2}
  \end{align}
with~$C_\tau=\ovlineC{C_{\rm rc},\tau}\ge1$ and also that~$\varkappa_2\in\clC([\tau,\infty),H)$.

Denoting by~$\fkZ(s,t)h$ the solution of~$\dot w=(-A-A_{\rm rc}+BK)w$, with~$t\ge s$ and~$w(s)=h$, we use Duhamel formula to obtain
\begin{align}
 y(t)&=\fkZ(s,t)y(s)+\int_{s}^{t} \fkZ(r,t)BK\varkappa_2(r)\,\rmd r,\quad\mbox{ } t\ge s\ge\tau,\notag
  \end{align}
and from the stability of~$(-A-A_{\rm rc}+BK)$, given by Assumption~\ref{A:Kstab}, we obtain,
\begin{subequations}\label{squuez-st}
\begin{align}
\norm{y(t)}{H}&\le D_1\rme^{-\mu_1 (t-s)}\norm{y(s)}{H}\notag\\
&\quad+D_1\int_{s}^{t}\rme^{-\mu_1(t-r)} \norm{BK\varkappa_2(r)}{H}\,\rmd r\label{squuez-st-0}\\
&\hspace{-3em}\le D_1\rme^{-\mu_1(t-s)}\norm{y(s)}{H}\notag\\
&\hspace{-3em}\quad+{\norm{\widehat y-y}{L^\infty((s-\tau,t-\tau);H)}} D_1C_\tau\norm{BK}{\clL(H)}\mu_1^{-1},\notag\\
&\text{ for~$t\ge s\ge\tau$.}\label{squuez-st-1}
 \end{align}

By~\eqref{est.eta} we find $\norm{\widehat y-y}{L^\infty((s-\tau,t-\tau);H)}\le\norm{\eta}{L^\infty(\bbR_+;H)}\le C\norm{\eta_0}{H}<\infty$ and, by  Lemma~\ref{L:regSol},
 \begin{align}
 & \norm{y(t)}{H}\le  C_\tau\norm{y(s)}{H}\le  C_\tau\rme^{\mu_1\tau}\rme^{-\mu_1 (t-s)}\norm{y(s)}{H},\notag\\
 &\text{for~$0\le s\le \tau\le t$}, \label{squuez-st-2}
   \end{align} 
 with~$C_\tau\coloneqq\ovlineC{C_{\rm rc},\tau}\ge1$.
 {Finally, for~$0\le s\le \tau\le t$, by combining~\eqref{squuez-st-1} and~\eqref{squuez-st-2}, 
 \begin{align}
\norm{y(t)}{H}&\le D_1\rme^{-\mu_1(t-\tau)}\norm{y(\tau)}{H}\notag\\
&\quad+{\norm{\widehat y-y}{L^\infty((0,t-\tau);H)}} D_1C_\tau\norm{BK}{\clL(H)}\mu_1^{-1}\notag\\
&\le D_1C_\tau\rme^{\mu_1\tau}\rme^{-\mu_1(t-s)}\norm{y(s)}{H}\notag\\
&\quad+{\norm{\widehat y-y}{L^\infty((0,t-\tau);H)}} D_1C_\tau\norm{BK}{\clL(H)}\mu_1^{-1},\notag\\
&\mbox{for } 0\le s\le \tau\le t.\label{squuez-st-3}
\end{align}}
 \end{subequations}
 By the estimates in~\eqref{squuez-st} {and by~$D_1C_\tau\rme^{\mu_1\tau}\ge\max\{D_1,C_\tau\}\ge1$,} it follows that
     \begin{align}
  \norm{y(t)}{H}&\le C_1\rme^{{-\mu_1} (t-s)}\norm{y(s)}{H}\label{est-y}\\
&\quad+{\norm{\fkE_0(\widehat y-y)}{L^\infty((s-\tau,t-\tau);H)}} C_2,\quad  t\ge s\ge0,\notag
   \end{align} 
with~$C_1\coloneqq {D_1C_\tau\rme^{\mu_1\tau}}$ and~$C_2\coloneqq D_1C_\tau\norm{BK}{\clL(H)}\mu_1^{-1}$.

It remains to show the regularity of the input. From~\eqref{hatu=KPred} and~\eqref{Pred-vkappas} we have that $\widehat u_\tau^{[\tau]}(t)=K(\varkappa_1(t)+\varkappa_2(t))=K(y(t)+\varkappa_2(t))$ and by~\eqref{cont.vkappa2} we know that~$\varkappa_2\in\clC([\tau,\infty),H)$. 
Then, for an arbitrarily fixed~$T>\tau$,
by~\eqref{dyn-y-vkappa2} and standard regularity arguments for parabolic-like equations, we can conclude that~$y\in W((\tau,T),V,V')\subset\clC([\tau,\infty),H)$. 
Combining~\eqref{est-y}, \eqref{est.eta}, and~\eqref{cont.vkappa2}, we also find~$y+\varkappa_2\in L^\infty((\tau,\infty),H)$. Therefore, from~$K\in\clL(H,\bbR^m)$ we obtain~$\widehat u_\tau^{[\tau]}=K(y+\varkappa_2)\in\clC([\tau,\infty),\bbR^m)\cap L^\infty((\tau,\infty),\bbR^m)$. Consequently,~$\widehat u_\tau=\widehat u_\tau^{[\tau]}(\cdot+\tau)\in L^\infty(\bbR_+,\bbR^m)\cap\clC([0,\infty),\bbR^m)$.
  \end{proof}

   \begin{remark}
By choosing~$s=\frac t2$ in~\eqref{estT:main}, we arrive at
      \begin{align}
\limsup_{t\to\infty}\norm{y(t)}{H}&\!\le\! C_2\limsup_{t\to\infty}\norm{\fkE_0(\widehat y-y)}{L^\infty({(\frac{t}2-\tau,t-\tau)};H)}\notag\\
&\hspace{-3em}=C_2\lim_{t\to\infty}\norm{\widehat y-y}{L^\infty((t,\infty);H)}=C_2\eta_\infty,\label{asylim-yt}
   \end{align} 
with~$\eta_\infty\coloneqq{\lim\limits_{t\to\infty}\norm{\widehat y-y}{L^\infty((t,\infty);H)}}$, which implies that, given~$\gamma>C_2$, for large enough time~$t$, the norm of the state  remains in a neighborhood of zero with radius~$\rho=\gamma \eta_\infty$,  which is proportional  to the asymptotic upper bound~$\eta_\infty$  of the state-estimation error.
 \end{remark}

\section{Exponential observers} \label{S:observers}
Given an output operator~$W\in\clL(V,\bbR^s)$ we need to  design an output-injection operator~$L\in\clL(\bbR^s,V')$ such that Assumption~\ref{A:detect} is satisfied. We go a step further and seek an  
exponential observers, that is, we design~$L\in\clL(\bbR^s,V')$ so that~$-A-A_{\rm rc}+LW$ is a~$(D_2,\mu_2)$-stable operator, for some~$D_2\ge1$ and~$\mu_2>0$, see Definition~\ref{D:clAstab}. In this case we will have that the error~$\eta\coloneqq\widehat y-y$ decreases exponentially (cf.~\eqref{sys-esterr-BKY}. Further,~$\norm{y(t)}{H}$ will decrease exponentially to zero as well, as follows.

\begin{theorem}\label{T:exp}
 Let Assumptions~\ref{A:HV}--\ref{A:Kstab} hold true and let~$-A-A_{\rm rc}+LW$ be~$(D_2,\mu_2)$-stable. Then, for any given~$0<\mu\le\min\{\mu_2,\mu_1\}$ such that~$\mu<\max\{\mu_2,\mu_1\}$, there exist~$D\ge1$ such that the solution~$(y,\widehat y)$ of the closed-loop system~\eqref{sys-cl-BKY} satisfies, for~$t\ge s\ge0$,
 \begin{align}
&\norm{(y(t),\eta(t))}{H\times H}\le D\rme^{-\mu(t-s)}\norm{(y(s),\eta(s))}{H\times H},
\notag
\intertext{where~$\eta=\widehat y-y$ is the state estimate error. Furthermore, for a suitable constant~$D_0\ge0$, the control input satisfies}
&\norm{\widehat u_\tau(t)}{\bbR^m}\le D_0\rme^{-\mu(t-s)}\norm{(y(s),\eta(s))}{H\times H}.
\notag
\end{align}
\end{theorem}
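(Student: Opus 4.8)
The plan is to exploit the cascade structure of~\eqref{sys-cl-BKY}: the estimate error~$\eta=\widehat y-y$ evolves autonomously through~\eqref{sys-esterr-BKY}, while the state~$y$ obeys a system that is exponentially stable under the nominal feedback (rate~$\mu_1$, Assumption~\ref{A:Kstab}) and forced by a term driven by~$\eta$. Since~$\clA_{LW}=-A-A_{\rm rc}+LW$ is now assumed~$(D_2,\mu_2)$-stable, I would first record, directly from Definition~\ref{D:clAstab} applied to~\eqref{sys-esterr-BKY}, the error decay $\norm{\eta(t)}{H}\le D_2\rme^{-\mu_2(t-s)}\norm{\eta(s)}{H}$ for $t\ge s\ge0$; this already gives the~$\eta$-component of the claimed bound because~$\mu\le\mu_2$, and it upgrades~\eqref{est.eta}.

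For the~$y$-component I would not use the final inequality~\eqref{estT:main} of Theorem~\ref{T:main} — its~$L^\infty$ term does not decay in~$t$ — but instead return to the Duhamel estimate~\eqref{squuez-st-0}, that is $\norm{y(t)}{H}\le D_1\rme^{-\mu_1(t-s)}\norm{y(s)}{H}+D_1\int_s^t\rme^{-\mu_1(t-r)}\norm{BK\varkappa_2(r)}{H}\,\rmd r$, valid for $t\ge s\ge\tau$. Inserting the pointwise bound $\norm{\varkappa_2(r)}{H}\le C_\tau\norm{\eta(r-\tau)}{H}$ from~\eqref{cont.vkappa2} together with the error decay, the integral becomes a convolution of two exponentials. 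Evaluating it yields the factor $\tfrac{\rme^{-\mu_2(t-s)}-\rme^{-\mu_1(t-s)}}{\mu_1-\mu_2}$ when $\mu_1\ne\mu_2$ and $(t-s)\rme^{-\mu_1(t-s)}$ when $\mu_1=\mu_2$; in both cases it is bounded by $\widehat C\rme^{-\mu(t-s)}$ with $\widehat C=\widehat C(\mu_1,\mu_2,\mu)$, where $\mu\le\min\{\mu_1,\mu_2\}$ controls the exponentials and the strict inequality $\mu<\max\{\mu_1,\mu_2\}$ is precisely what absorbs the polynomial factor~$(t-s)$ in the resonant case. Combining this with the error decay, and treating the initial layer $0\le s\le\tau$ as in~\eqref{squuez-st-2} through Lemma~\ref{L:regSol} — modulo the pre-history term discussed below — produces $\norm{y(t)}{H}\le D\rme^{-\mu(t-s)}\norm{(y(s),\eta(s))}{H\times H}$ and hence the combined estimate.

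Once both components decay, the input bound follows from the identity $\widehat u_\tau^{[\tau]}(t)=K\bigl(y(t)+\varkappa_2(t)\bigr)$ obtained from~\eqref{hatu=KPred}--\eqref{Pred-vkappas}. Indeed $\norm{\widehat u_\tau(t)}{\bbR^m}=\norm{\widehat u_\tau^{[\tau]}(t+\tau)}{\bbR^m}\le\norm{K}{\clL(H,\bbR^m)}\bigl(\norm{y(t+\tau)}{H}+C_\tau\norm{\eta(t)}{H}\bigr)$, using~\eqref{cont.vkappa2} once more, and the already established exponential decay of~$y$ and of~$\eta$ then delivers the stated bound with a suitable~$D_0$ (absorbing the harmless factor~$\rme^{-\mu\tau}$).

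The delicate point — and the main obstacle — is the time-delay in the forcing: because~$\varkappa_2(r)$ is controlled by~$\eta(r-\tau)$ rather than by~$\eta(r)$, the convolution in the second step naturally involves~$\eta$ over $[s-\tau,t-\tau]$, i.e.\ over a~$\tau$-window reaching back before the reference time~$s$. Splitting the integral at~$r=s+\tau$, the tail~$r\ge s+\tau$ is immediately expressed through~$\eta(s)$ via the error decay, whereas the head~$r\in[s,s+\tau]$ depends on the pre-history~$\eta\rest{[s-\tau,s]}$, which parabolic (non-reversible) dynamics do not allow one to bound by~$\eta(s)$ uniformly. I expect this to be reconciled by first establishing the estimate from the initial time~$s=0$, where the extension-by-zero~$\fkE_0$ annihilates the head contribution, and then propagating to general~$s\ge0$; keeping track of this~$\tau$-window, rather than the resonant-rate bookkeeping, is where the genuine care is required.
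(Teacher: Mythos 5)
Your proposal follows the paper's own proof essentially step by step: the forward decay of~$\eta$ from Definition~\ref{D:clAstab} applied to~\eqref{sys-esterr-BKY} (the paper's~\eqref{squuez-exp-eta}), the Duhamel bound~\eqref{squuez-st-0} combined with the pointwise estimate~\eqref{cont.vkappa2} for~$\varkappa_2$, an exponential-convolution bound, and the input estimate via~$\widehat u_\tau^{[\tau]}=K(y+\varkappa_2)$. The only cosmetic differences are that the paper outsources the convolution bound to~\cite[Prop.~3.2]{AzmiRod20} while you compute it by hand (your observation that the hypothesis~$\mu<\max\{\mu_1,\mu_2\}$ is exactly what absorbs the factor~$(t-s)$ in the resonant case~$\mu_1=\mu_2$ is precisely the content of that citation), and that your input bound, evaluated at~$t+\tau$ so that~$\varkappa_2(t+\tau)$ is controlled by~$\eta(t)$ at the \emph{current} time, is actually tidier than the paper's version.

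The delicate point you isolate is, however, a genuine one, and your proposed repair does not close it — nor, strictly speaking, does the paper, whose proof performs without comment exactly the step you refuse to take: it replaces~$\norm{\eta(r-\tau)}{H}$ by~$D_2\rme^{\mu_2\tau}\rme^{-\mu_2(r-s)}\norm{\eta(s)}{H}$ over the whole range~$r\in[s,t]$, which is legitimate only for~$r\ge s+\tau$; on the head~$r\in[s,s+\tau)$ this is a backward-in-time bound of~$\eta(r-\tau)$ by~$\eta(s)$. Your fix — prove the estimate from~$s=0$, where the head is harmless, then propagate to general~$s$ — fails at the propagation stage: passing from~$\norm{y(t)}{H}\le C\rme^{-\mu t}\norm{(y_0,\eta_0)}{H\times H}$ to a bound in terms of~$(y(s),\eta(s))$ requires the lower bound~$\norm{(y(s),\eta(s))}{H\times H}\ge c\,\rme^{-\mu s}\norm{(y_0,\eta_0)}{H\times H}$, i.e.\ again a backward estimate, and no semigroup property is available because the delay makes~$(y(s),\eta(s))$ an incomplete state. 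What your computation (and the paper's) genuinely proves for~$t\ge s\ge\tau$ is
\begin{equation}\notag
\norm{y(t)}{H}\le D_1\rme^{-\mu_1(t-s)}\norm{y(s)}{H}+C\,\rme^{-\mu(t-s)}\norm{\eta(s-\tau)}{H},
\end{equation}
since on the head the only available decay of~$\eta(r-\tau)$ starts at time~$s-\tau$. The statement actually reached therefore carries~$\eta(\max\{s-\tau,0\})$ — equivalently a~$\tau$-history norm of~$\eta$, in the spirit of the~$\fkE_0$-window appearing in Theorem~\ref{T:main} — in place of~$\eta(s)$ on the right-hand side; this still gives the claimed exponential decay of~$(y,\eta)$ and of~$\widehat u_\tau$ measured from the initial time, but not the printed uniform-in-$s$ inequality. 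To obtain the theorem exactly as stated, one would need an additional argument, absent from both your proposal and the paper, bounding the history of~$\eta$ on~$[s-\tau,s]$ by~$\norm{\eta(s)}{H}$, and this fails in general for parabolic dynamics.
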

\begin{proof}
With~$\eta\coloneqq \widehat y-y$, from~\eqref{sys-cl-BKY} we obtain, for~$t>0$,
\begin{subequations}\label{sys-cleta}
 \begin{align}
 &\dot y+Ay+ A_{\rm rc}y =0,\quad&& t<\tau,\label{sys-cleta-y1}\\
 &\dot y+Ay+ A_{\rm rc}y =B\widehat K_\tau(t-\tau,\widehat y(t-\tau)),&& t>\tau,\label{sys-cleta-y2}\\
 &\dot{\eta}+A\eta+ A_{\rm rc}\eta =LW\eta,&& t>0,\label{sys-cleta-eta}\\
  & \eta(0)= \widehat y_0-y_0.&&\label{sys-cleta-ic}
 \end{align}
   \end{subequations}
   For the estimate error~$\eta$ we have that
 \begin{equation}
   \norm{\eta(t)}{H}\le D_2\rme^{-\mu_2(t-s)}\norm{\eta(s)}{H},\qquad t\ge s\ge0.\label{squuez-exp-eta}  
 \end{equation}
Proceeding as in the proof of Theorem~\ref{T:main} we arrive at~\eqref{squuez-st-0}, for~$t\ge s\ge\tau$,
\begin{align}
 \norm{y(t)}{H}&\le D_1\rme^{-\mu_1 (t-s)}\norm{y(s)}{H}\notag\\
 &\quad+D_1\int_{s}^{t}\rme^{-\mu_1(t-r)} \norm{BK\varkappa_2(r)}{H}\,\rmd r\notag\\
 &\le D_1\rme^{-\mu_1 (t-s)}\norm{y(s)}{H}\notag\\
 &\quad+D_1\norm{BK}{\clL(H)}\int_{s}^{t}\rme^{-\mu_1(t-r)} \norm{\varkappa_2(r)}{H}\,\rmd r,
 \notag
 \end{align}
 with~$\varkappa_2$ satisfying~\eqref{cont.vkappa2}. Hence,
\begin{align}
 \norm{y(t)}{H}&\le D_1\rme^{-\mu_1 (t-s)}\norm{y(s)}{H}\notag\\
 &\quad+D_1\norm{BK}{\clL(H)}C_\tau\int_{s}^{t}\rme^{-\mu_1(t-r)} \norm{\eta(r-\tau)}{H}\,\rmd r\notag\\
 &= D_1\rme^{-\mu_1 (t-s)}\norm{y(s)}{H}\notag\\
 &\quad+C_0\norm{\eta(s)}{H}\int_{s}^{t}\rme^{-\mu_1(t-r)} \rme^{-\mu_2(r-s)}\,\rmd r.\notag
 \end{align}
 with~$C_0\coloneqq D_1\norm{BK}{\clL(H)}C_\tau D_2\rme^{\mu_2\tau}$.
 By~\cite[Prop.~3.2]{AzmiRod20}, we we have that for any~$\mu$ satisfying~$0<\mu\le\min\{\mu_2,\mu_1\}$ and~$\mu<\max\{\mu_2,\mu_1\}$, there exists~$C_1>0$ such that
 \begin{align}
 \norm{y(t)}{H}&\le D_1\rme^{-\mu_1 (t-s)}\norm{y(s)}{H}\notag\\
 &\quad+C_0C_1\rme^{-\mu (t-s)}\norm{\eta(s)}{H}.\label{squuez-exp-y}
 \end{align}
 By combining~\eqref{squuez-exp-y} and~\eqref{squuez-exp-eta}, we find
  \begin{align}
 &\norm{(y(t),\eta(t))}{H}^2=\norm{y(t)}{H}^2+\norm{\eta(t)}{H}^2\notag\\
 &\le 2D_1^2\rme^{-2\mu_1 (t-s)}\norm{y(s)}{H}^2+2C_0^2C_1^2\rme^{-2\mu (t-s)}\norm{\eta(s)}{H}^2\notag\\
 &\quad+D_2^2\rme^{-2\mu_2(t-s)}\norm{\eta(s)}{H}^2\notag\\
 &=D\rme^{-2\mu (t-s)}\norm{(y(s),\eta(s))}{H\times H}^2,\notag
 \end{align}
 with~$D\coloneqq \max\{2D_1^2,2C_0^2C_1^2+D_2^2\}$.

Finally, for the input, we find
\[
\norm{\widehat u_\tau^{[\tau]}(t)}{\bbR^m}\le \norm{K}{\clL(H,\bbR^m)}\norm{\fkY(t-\tau,t,\widehat u_\tau^{[\tau]},\widehat y(t-\tau))}{H}
\]
and we write again
\begin{align*}
 \Xi(t)&\coloneqq \fkY(t-\tau,t,\widehat u_\tau^{[\tau]},\widehat y(t-\tau))\notag\\
 &= \fkY(t-\tau,t,\widehat u_\tau^{[\tau]},y(t-\tau))+\fkY(t-\tau,t,0,\eta(t-\tau))\\
  &= y(t)+\varkappa_2(t),
\end{align*}
with~$\varkappa_2$ satisfying~\eqref{cont.vkappa2}. Hence, by~\eqref{squuez-exp-y}, 
\begin{align}
\norm{\Xi(t)}{H}&\le C_3\rme^{-\mu(t-s)}\norm{(y(s),\eta(s))}{H\times H}+C_\tau^\frac12\norm{ \eta (t-\tau))}{H}\notag
\end{align}
with~$C_3\coloneqq \max\{D_1,C_0C_1\}$. Now, \eqref{squuez-exp-eta} gives us, for~$t\ge s+\tau$,
\begin{align*}
\norm{ \eta (t-\tau))}{H}
&\le D_2\rme^{\mu\tau}\rme^{-\mu(t-s)}\norm{\eta(s)}{H}
\end{align*}
and we arrive at
\begin{align}
\norm{\Xi(t)}{H}&\le C_4\rme^{-\mu(t-s)}\norm{(y(s),\eta(s))}{H\times H},\mbox{ for } t\ge s+\tau,\notag
\end{align}
 with~$C_4\coloneqq C_3+C_\tau^\frac12D_2\rme^{\mu\tau}$. Therefore, for~$t\ge s\ge0$,
 \begin{align*}
 \norm{\widehat u_\tau(t)}{\bbR^m}&=\norm{\widehat u_\tau^{[\tau]}(t+\tau)}{\bbR^m}\notag\\
 &\le C_4\norm{K}{\clL(H,\bbR^m)}\rme^{-\mu(t+\tau-s)}\norm{(y(s),\eta(s))}{H\times H},
 \end{align*}
which finishes the proof.
  \end{proof}

Below, we shall consider sensors performing average-like measurements and shall give suitable output-injection operators  explicitly. 

 \begin{remark}\label{R:observer-errors}
 In practice the output~$w(t)$ will be subject to small sensor measurement errors. This means that likely we will not have a vanishing asymptotic limit for the state estimate error as~${\widehat e_\infty\coloneqq\lim\limits_{t\to\infty}\norm{\widehat y-y}{L^\infty((t,\infty);H)}}=0$ in~\eqref{asylim-yt}. Instead, we expect  that~$\widehat e_\infty=\widehat e_\infty(\zeta_{\rm mag})$ will be a constant depending on the magnitude~$\zeta_{\rm mag}$ of the error of those sensor measurements, hopefully so that~$\widehat e_\infty(\zeta_{\rm mag})\to0$ as~$\zeta_{\rm mag}\to 0$. Furthermore, in applications, the model~\eqref{sys-cl-BKY-haty1}--\eqref{sys-cl-BKY-haty2} will likely be a numerical approximation of~\eqref{sys-cl-BKY-y1}--\eqref{sys-cl-BKY-y2}, thus~$\widehat e_\infty$ may also depend on the numerical discretization errors. 
   \end{remark}

\section{Satisfiability of the assumptions} \label{S:satAssum}
We consider a  concrete parabolic equation as
\begin{align*}
 &\tfrac{\partial}{\partial t} y -(\nu\Delta-\Id) y+ay +b\cdot\nabla y
 =B u^{[\tau]},\quad
  \fkB  y\rest{\Gamma}=0,
 \end{align*}
evolving in the pivot space is~$H\coloneqq L^2(\Omega)$, with initial state~$y(0)=y_0\in H$.
By taking~$A\coloneqq-\nu\Delta+\Id$ as the shifted-scaled Laplacian under the given boundary conditions defined by~$\fkB$ (e.g., of Neumann or of Dirichlet type)  and by taking~$A_{\rm rc}\coloneqq a\Id +b\cdot\nabla$, we see that Assumptions~\ref{A:HV}--\ref{A:Arc} are satisfied.

Now, we show that the satisfiability of Assumptions~\ref{A:detect} and~\ref{A:Kstab} follows from the results in~\cite{KunRodWal21,Rod21-aut}, for  large enough numbers of appropriately placed actuators and sensors. See Fig.~\ref{fig:act} (cf.~\cite[Fig.~1]{Rod21-aut}) as an illustration for a spatial rectangular domain, where a rescaled copy of the configuration corresponding to~$M=1$ is taken in partitions of the rectangular domain (an analogue domain-partition based strategy can be applied to more general convex polygonal domains as well; see~\cite[Rem.~2.8]{AzmiKunRod23-ieee}). 
  \begin{figure}[htbp]%
    \centering%
             {\includegraphics[width=1\textwidth]{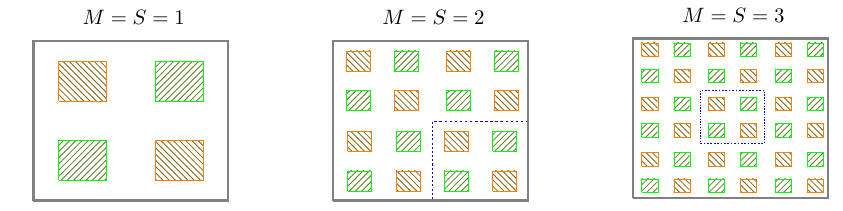}} 
        \caption{Actuators (``\slash''-pattern) and sensors (``\textbackslash''-pattern)}%
     \label{fig:act}%
\end{figure}

Let~$U_M=\{\indf_{\omega_{j}^{1,M}}\mid 1\le j\le m\}$ denote the set of actuators, given by indicator functions with supports~$\omega_{j}^{1,M}$ as in Fig.~\ref{fig:act}, for a given~$M\in\bbN_+$, and let~$\clU_M\coloneqq\linspan U_M$. 
Analogously, we denote the set of sensors by~$W_S=\{\indf_{\omega_{j}^{2,S}}\mid 1\le j\le S_\varsigma\}$, $S\in\bbN_+$, and also~$\clW_S\coloneqq\linspan W_S$. Note that, in Fig.~\ref{fig:act} we have~$m=2M^2$ actuators and~$s=2S^2$ sensors.
The control operator~$B=U_M^\diamond\colon\bbR^m\to H$ is
\begin{equation}\label{B=UM}
U_M^\diamond u(t)\coloneqq {\textstyle\sum\limits_{j=1}^{m}}u_j(t)\indf_{\omega_{j}^{1,M}}\in\clU_M,
\end{equation}
and the output operator $W =W_S^\vee\colon H\to \bbR^s$ is
\begin{equation}\label{W=WS}
W_S^\vee y(t)\coloneqq \left({\textstyle\int_{\omega_1^{2,S}}}y(t)\,\rmd x,\;\dots,\;{\textstyle\int_{\omega_{s}^{2,S}}}y(t)\,\rmd x\right),
\end{equation}
giving us the output of average-like sensors measurements.
 Further, let~$P_{\clF}\in\clL(H)$ denote the orthogonal projection in~$H$ onto a closed subspace~$\clF\subseteq H$. 
As a corollary of~\cite[Cor.~3.2 (with~$\clK_M(t,p)\coloneqq p$)]{KunRodWal21}, we have the following.
\begin{lemma}\label{L:KMstab}
Let Assumptions~\ref{A:HV}--\ref{A:Arc} hold true. Then, for each ~$\mu_1>0$, there exists a sufficiently~$M$ and~$\lambda>0$, such that the
operator~$\clA_{U_M}=-A-A_{\rm rc}-\lambda P_{\clU_M}$ is~$(D_1,\mu_1)$-stable, for some constant~$D_1\ge1$.
\end{lemma}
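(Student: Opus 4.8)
The plan is to verify that the present tuple $(A, A_{\rm rc}, U_M)$ meets the standing hypotheses of \cite[Cor.~3.2]{KunRodWal21} and then to invoke that corollary directly with the particular choice $\clK_M(t,p) \coloneqq p$, which is exactly the reduction the statement announces. First I would recall the spectral structure: since $A = -\nu\Delta + \Id$ under the prescribed boundary conditions is symmetric and induces a complete scalar product on $V$ (Assumptions~\ref{A:HV}--\ref{A:A}), it is self-adjoint and positive with compact resolvent, hence admits a complete orthonormal system of eigenfunctions $(e_k)_{k\ge1}$ in $H$ with eigenvalues $0 < \alpha_1 \le \alpha_2 \le \dots \to \infty$. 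By Assumption~\ref{A:Arc}, $A_{\rm rc}$ is a uniformly bounded perturbation in $\clL(V,H) + \clL(H,V')$. These are precisely the structural assumptions under which \cite[Cor.~3.2]{KunRodWal21} is formulated, so that part of the verification is immediate.

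Next I would check the actuator-richness condition, which is the genuine content of the corollary's hypotheses. The result requires the actuator family to resolve the finite-dimensional ``slow'' spectral subspace of $A$ well enough, quantified by an approximation/angle constant associated with $P_{\clU_M}$ and the orthogonal projection onto $\linspan\{e_1,\dots,e_{N}\}$ for a suitable cutoff $N = N(\mu_1)$. For the indicator actuators $U_M = \{\indf_{\omega_j^{1,M}} \mid 1 \le j \le m\}$ arranged as in Fig.~\ref{fig:act} — obtained by rescaling the single-cell ($M=1$) configuration across a uniform partition of $\Omega$ — this constant is driven below the required threshold as $M$ grows, because refining the partition increases the resolving power of the actuators on any fixed finite set of eigenfunctions. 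This is exactly the geometric fact established for such configurations in \cite{Rod21-aut,KunRodWal21} (and extendable to convex polygonal domains as in \cite[Rem.~2.8]{AzmiKunRod23-ieee}).

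With these hypotheses in hand, applying \cite[Cor.~3.2]{KunRodWal21} with $\clK_M(t,p) \coloneqq p$ produces the explicit feedback $-\lambda P_{\clU_M}$ and yields the conclusion: given any target rate $\mu_1 > 0$, one chooses $M$ large enough to resolve all eigenmodes below the threshold fixed by $\mu_1$, and then $\lambda > 0$ large enough to dominate the reaction--convection perturbation on that resolved subspace, so that $\clA_{U_M} = -A - A_{\rm rc} - \lambda P_{\clU_M}$ is $(D_1,\mu_1)$-stable for some $D_1 \ge 1$ in the sense of Definition~\ref{D:clAstab}. This is the asserted statement.

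The main obstacle is the second step: matching the abstract actuator hypothesis of \cite[Cor.~3.2]{KunRodWal21} to a concrete, verifiable statement about the indicator actuators on the partitioned domain, and confirming that increasing $M$ indeed controls the relevant approximation constant below the needed threshold uniformly in the cutoff dictated by $\mu_1$. Once that geometric compatibility is in place, the remainder is a direct citation and bookkeeping of the constants $(D_1,\mu_1,\lambda)$.
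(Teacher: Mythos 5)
Your core reduction coincides with the paper's: Lemma~\ref{L:KMstab} is obtained by citing \cite[Cor.~3.2]{KunRodWal21} with the specialization $\clK_M(t,p)\coloneqq p$, and that citation (plus a remark) is the paper's entire proof. The gap is in your account of what must be verified to invoke that corollary — which you yourself flag as the main obstacle. The actuator hypothesis there is not a spectral-resolution condition: it does not ask that $P_{\clU_M}$ approximate the orthogonal projection onto $\linspan\{e_1,\dots,e_N\}$ for a cutoff $N=N(\mu_1)$. Instead, as the paper notes in the remark following the two lemmas, the additional hypothesis is \cite[Assum.~2.5]{KunRodWal21}, which concerns an \emph{auxiliary} family $\widetilde U_M$ of functions: roughly, $H$ must split as a direct sum of $\clU_M$ and the orthogonal complement of $\linspan \widetilde U_M$, with suitably controlled oblique projections, and a Poincar\'e-type quantity $\inf\bigl\{\norm{v}{V}^2/\norm{v}{H}^2 \mid v\in (\linspan\widetilde U_M)^\perp\bigr\}$ must diverge as $M\to\infty$. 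The paper satisfies this by taking $\widetilde U_M$ to be bump-like ``regularized indicator functions'' as in \cite[Sect.~6, Eq.~(6.3)]{Rod21-aut}. Your verification plan would therefore have you checking the wrong condition, and it omits the auxiliary set $\widetilde U_M$ entirely.

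Relatedly, the mechanism you describe — choose $M$ large enough to resolve all eigenmodes below a threshold fixed by $\mu_1$, then choose $\lambda$ to dominate $A_{\rm rc}$ on that resolved subspace — is the spectral-truncation argument that works for autonomous dynamics, and the paper explicitly warns (Section~\ref{sS:intro-stabilizability}, citing \cite{Wu74}) that spectral tools are not appropriate in the nonautonomous setting treated here. Concretely, $A_{\rm rc}(t)$ is time-dependent and couples the modes of $A$, and $P_{\clU_M}$ projects onto spans of indicator functions rather than eigenfunctions, so the closed-loop operator $-A-A_{\rm rc}(t)-\lambda P_{\clU_M}$ does not block-diagonalize in the eigenbasis of $A$; a modal domination argument would not close. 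The cited corollary instead proceeds by Lyapunov-type energy estimates built on the oblique-projection framework, which is precisely why its hypotheses take the form described above rather than the spectral form you posited.
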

\begin{lemma}\label{L:LSstab}
Let Assumptions~\ref{A:HV}--\ref{A:Arc} hold true. Then, for each~$\mu_2>0$, there exists a sufficiently large~$S$ and~$\lambda>0$, such that the
operator~$\clA_{W_S}=-A-A_{\rm rc}-\lambda P_{\clW_S}$ is~$(D_2,\mu_2)$-stable, for some constant~$D_2\ge1$.
\end{lemma}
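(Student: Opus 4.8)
The plan is to read Lemma~\ref{L:LSstab} as the exact structural twin of Lemma~\ref{L:KMstab}: the operator $\clA_{W_S}=-A-A_{\rm rc}-\lambda P_{\clW_S}$ is of the very same form $-A-A_{\rm rc}-\lambda P_{\clF}$ that appears in Lemma~\ref{L:KMstab}, the only change being that the actuator subspace $\clU_M=\linspan U_M$ is replaced by the sensor subspace $\clW_S=\linspan W_S$. Both are spans of indicator functions supported on the cells of a grid that is refined as the integer parameter ($M$, respectively $S$) grows. Since the corollary of~\cite{KunRodWal21} (cf.~also~\cite{Rod21-aut}) invoked for Lemma~\ref{L:KMstab} establishes $(D,\mu)$-stability of $-A-A_{\rm rc}-\lambda P_{\clF}$, for every prescribed rate $\mu>0$, as soon as the grid generating $\clF$ is fine enough and $\lambda>0$ is large enough, I would simply apply it verbatim with $\clF=\clW_S$. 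This produces, for each $\mu_2>0$, a large enough $S$, a $\lambda>0$, and a constant $D_2\ge1$ such that $\clA_{W_S}$ is $(D_2,\mu_2)$-stable.

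It remains to certify that $\clA_{W_S}$ is of the form $\clA_{LW}=-A-A_{\rm rc}+LW$ demanded by Assumption~\ref{A:detect}, i.e.\ that the correction $-\lambda P_{\clW_S}$ factors through the output operator $W=W_S^\vee$ of~\eqref{W=WS}. This is immediate once one notes that the orthogonal projection $P_{\clW_S}y$ depends on $y$ only through the inner products $\langle y,\indf_{\omega_j^{2,S}}\rangle_H=\int_{\omega_j^{2,S}}y\,\rmd x=(W_S^\vee y)_j$: writing $P_{\clW_S}=L_0W_S^\vee$, where $L_0\in\clL(\bbR^s,H)$ is the map read off from the (invertible) Gram matrix of the $\indf_{\omega_j^{2,S}}$, one obtains $-\lambda P_{\clW_S}=LW$ with $L\coloneqq-\lambda L_0\in\clL(\bbR^s,H)$. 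Hence $\clA_{W_S}=\clA_{LW}$, and the $(D_2,\mu_2)$-stability just obtained yields Assumption~\ref{A:detect} (in particular $(C,0)$-stability with $C=D_2$, since $\mu_2>0$).

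A conceivable alternative would proceed by duality, replacing detectability of $(-A-A_{\rm rc},W)$ by stabilizability of the adjoint pair; since $A^*=A$ and $P_{\clW_S}^*=P_{\clW_S}$, the adjoint of $\clA_{W_S}$ is $-A-A_{\rm rc}^*-\lambda P_{\clW_S}$, with $A_{\rm rc}^*$ again satisfying Assumption~\ref{A:Arc}, so Lemma~\ref{L:KMstab} applied to $A_{\rm rc}^*$ and $\clW_S$ would do the job. I would avoid this route, however, because in the nonautonomous setting the forward evolution of the adjoint operator is a \emph{backward} evolution rather than the adjoint of the forward evolution of $\clA_{W_S}$, so transferring the decay rate back to $\clA_{W_S}$ needs an extra time-reflection argument; the direct route above sidesteps this entirely. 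Accordingly, the only genuinely nontrivial ingredient—that the bounded reaction--convection perturbation $A_{\rm rc}$ of Assumption~\ref{A:Arc} can be dominated by $\lambda P_{\clW_S}$ so as to restore exponential decay at the prescribed rate—is inherited wholesale from~\cite[Cor.~3.2]{KunRodWal21}, and the main obstacle is merely the bookkeeping of recognizing $\clW_S$ as an admissible grid-subspace and of rewriting $-\lambda P_{\clW_S}$ as $LW$, both routine.
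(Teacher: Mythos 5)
Your proposal is correct and takes essentially the same approach as the paper: Lemma~\ref{L:LSstab} is presented there as a direct corollary of~\cite[Cor.~3.2]{KunRodWal21}, applied exactly as in Lemma~\ref{L:KMstab} but with the sensor subspace~$\clW_S$ in place of the actuator subspace~$\clU_M$, and your Gram-matrix factorization $-\lambda P_{\clW_S}=LW_S^\vee$ is precisely the paper's Lemma~\ref{L:BU-PU}, yielding $L=L_S=-\lambda W_S^\diamond\clV_S^{-1}$. The only detail you omit is the paper's remark that~\cite[Cor.~3.2]{KunRodWal21} rests on an additional assumption involving auxiliary bump-like regularized indicator functions, which is a side condition on the cited result rather than a gap in your argument.
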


\begin{remark}
The result in~\cite[Cor.~3.2]{KunRodWal21} is based on an additional assumption on an auxiliary set of functions~$\widetilde U_M$ as in~\cite[Assum.~2.5]{KunRodWal21}. These auxiliary functions can be taken as bump-like ``regularized indicator functions'' as in~\cite[Sect.~6, Eq.~(6.3)]{Rod21-aut}.
\end{remark}

Concerning  Assumption~\ref{A:detect}, we observe that it is satisfied if~$\clA_{LW}=-A- A_{\rm rc}+LW$ is $(D_2,\mu_2)$-stable. 
Hence, to show the satisfiability of Assumptions~\ref{A:detect} and~\ref{A:Kstab}, it is sufficient to observe that the projections in Lemmas~\ref{L:KMstab} and~\ref{L:LSstab} can be written as
\begin{equation}\label{seekKL}
-\lambda P_{\clU_M}=U_M^\diamond K\quad\mbox{and}\quad -\lambda P_{\clW_M}=LW_S^\vee,
\end{equation}
for an appropriate feedback input operator~$K\in\clL(V,\bbR^{m})$ and an appropriate output injection operator~$L\in\clL(\bbR^{s}, V')$. To find such a pair~$(K,L)$ it is convenient to introduce~$W_S^\diamond\colon\bbR^s\to\clW_S$ and~$U_M^\vee\colon H\to\bbR^m$ as
\begin{align}\label{UM-inv}
W_S^\diamond v&\coloneqq {\textstyle\sum\limits_{i=1}^{s}}v_iv\indf_{\omega_{i}^{2,S}};
\\
\label{WS-inv}
U_M^\vee h&\coloneqq \left({\textstyle\int_{\omega_1^{1,M}}}h\,\rmd x,\;\dots,\;{\textstyle\int_{\omega_{m}^{1,M}}}h\,\rmd x\right).
\end{align}

\begin{lemma}\label{L:BU-PU}
We have the identities
\begin{equation}\label{BU-PU}
P_{\clU_M}=U_M^\diamond  \clV_M^{-1} U_M^\vee,\qquad P_{\clW_S}=W_S^\diamond  \clV_S^{-1} W_S^\vee,
\end{equation}
where~$\clV_{1,M}=[\clV_{1,M,(i,j)}]\in\bbR^{m\times m}$ and~$\clV_{2,S}=[\clV_{2,S,(i,j)}]\in\bbR^{s\times s}$ are the symmetric matrices with entries~$\clV_{1,M,(i,j)}\coloneqq (\indf_{\omega_{j}^{1,M}},\indf_{\omega_{i}^{1,M}})_H$ and~$\clV_{2,S,(i,j)}\coloneqq (\indf_{\omega_{j}^{2,S}},\indf_{\omega_{i}^{2,S}})_H$ in the $i$-th row and~$j$-th column.
\end{lemma}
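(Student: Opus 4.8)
The plan is to recognize both identities as the standard Gram-matrix representation of the orthogonal projection onto a finite-dimensional subspace, and to verify them by checking the defining properties of an orthogonal projection rather than by an explicit coordinate computation. The first observation I would record is that $U_M^\vee$ is precisely the $H$-adjoint of $U_M^\diamond$: since $\int_{\omega_j^{1,M}}h\,\rmd x=(h,\indf_{\omega_j^{1,M}})_H$, one has $(U_M^\diamond u,h)_H=\sum_{j}u_j(\indf_{\omega_j^{1,M}},h)_H=(u,U_M^\vee h)_{\bbR^m}$, so that $(U_M^\diamond)^*=U_M^\vee$; the analogous relation $(W_S^\diamond)^*=W_S^\vee$ holds for the sensor operators. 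The second, purely algebraic, ingredient is the composition identity $U_M^\vee U_M^\diamond=\clV_{1,M}$, which follows at once from $(U_M^\vee U_M^\diamond a)_i=\sum_j a_j(\indf_{\omega_j^{1,M}},\indf_{\omega_i^{1,M}})_H=(\clV_{1,M}a)_i$, and likewise $W_S^\vee W_S^\diamond=\clV_{2,S}$.

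Granting that $\clV_{1,M}$ is invertible, I would then set $Q\coloneqq U_M^\diamond\clV_{1,M}^{-1}U_M^\vee$ and verify three properties. Its range lies in $\clU_M$ because $Q$ factors through $U_M^\diamond$, and in fact $Q$ restricts to the identity on $\clU_M$: for $\phi=U_M^\diamond a$ one obtains $Q\phi=U_M^\diamond\clV_{1,M}^{-1}(U_M^\vee U_M^\diamond)a=U_M^\diamond\clV_{1,M}^{-1}\clV_{1,M}a=\phi$, whence $Q^2=Q$. Moreover $Q$ is self-adjoint, since $Q^*=(U_M^\vee)^*(\clV_{1,M}^{-1})^{\top}(U_M^\diamond)^*=U_M^\diamond\clV_{1,M}^{-1}U_M^\vee=Q$, using the symmetry of $\clV_{1,M}$ together with the adjoint relation above. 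A self-adjoint idempotent whose range is $\clU_M$ is exactly the orthogonal projection $P_{\clU_M}$, which gives the first identity; equivalently, the normal-equations viewpoint (choosing the coefficient vector $c$ so that $h-U_M^\diamond c\perp\clU_M$, i.e.\ $U_M^\vee h=\clV_{1,M}c$) produces the same formula and could be used in its place. The proof of the identity for $P_{\clW_S}$ is verbatim, replacing the quadruple $(U_M^\diamond,U_M^\vee,\clV_{1,M},\clU_M)$ by $(W_S^\diamond,W_S^\vee,\clV_{2,S},\clW_S)$.

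The only point requiring genuine care is the invertibility of the Gram matrices, equivalently the linear independence of the chosen actuators and sensors. For the configuration in Fig.~\ref{fig:act} the supports $\omega_j^{1,M}$ are mutually disjoint and of positive measure, so $\clV_{1,M}$ is in fact diagonal with strictly positive diagonal entries $(\indf_{\omega_j^{1,M}},\indf_{\omega_j^{1,M}})_H=\vol(\omega_j^{1,M})>0$, hence trivially invertible, and the same applies to $\clV_{2,S}$. More generally, the Gram matrix of any linearly independent family in $H$ is symmetric positive-definite; thus this is the single step appealing to the geometry of the supports, and it is the mildest of obstacles.
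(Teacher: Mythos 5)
Your proof is correct, but it takes a genuinely different route from the paper, which in fact offers no argument of its own: there, the statement of Lemma~\ref{L:BU-PU} is simply declared to be a corollary of the cited result~\cite[Lem.~2.8]{KunRod19-cocv}. Your argument---identifying $U_M^\vee=(U_M^\diamond)^*$ and $W_S^\vee=(W_S^\diamond)^*$ from $\int_{\omega_j^{1,M}}h\,\rmd x=(h,\indf_{\omega_j^{1,M}})_H$, computing $U_M^\vee U_M^\diamond=\clV_{1,M}$ and $W_S^\vee W_S^\diamond=\clV_{2,S}$, and then checking that $Q\coloneqq U_M^\diamond\clV_{1,M}^{-1}U_M^\vee$ is a self-adjoint idempotent whose range is exactly $\clU_M$---is the standard Gram-matrix characterization of an orthogonal projection, carried out in full; each step (range and fixed-point property giving $Q^2=Q$, symmetry of $\clV_{1,M}$ giving $Q^*=Q$, and the uniqueness of a self-adjoint idempotent with prescribed range) is sound. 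What the two approaches buy is clear: the paper's citation is economical, while your proof makes the lemma self-contained and, more importantly, surfaces the one hypothesis the paper leaves implicit when it writes $\clV_M^{-1}$ and $\clV_S^{-1}$, namely the invertibility of the Gram matrices. You correctly reduce this to linear independence of the actuator/sensor families, and observe that for the configuration of Fig.~\ref{fig:act} the supports are pairwise disjoint with positive measure, so the Gram matrices are diagonal with positive diagonal entries. One harmless notational wrinkle you inherited from the statement itself: the identities are written with $\clV_M$ and $\clV_S$, while the matrices are defined as $\clV_{1,M}$ and $\clV_{2,S}$; your proof uses the latter consistently, which is the intended reading.
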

The statement of Lemma~\ref{L:BU-PU} is a corollary of~\cite[Lem.~2.8]{KunRod19-cocv}. 
The  sought pair~$(K,L)$ satisfying~\eqref{seekKL} is
\begin{align*}\notag
K&=K_M\coloneqq-\lambda \clV_M^{-1} U_M^\vee\in\clL(H,\bbR^{M_\sigma}),\\
L&=L_S\coloneqq-\lambda W_S^\diamond  \clV_S^{-1}\in\clL(\bbR^{S_\varsigma},H).
\end{align*}

\section{Numerical results} \label{S:numer}

We take diffusion-reaction-convection parameters as
\begin{align}
&\nu=0.1,\qquad b(x,t)=\begin{bmatrix}x_1+x_2\\|\cos(6t)x_1x_2|_\bbR\end{bmatrix},\notag \\
 &a(x,t)=-\tfrac32+x_1 -|\sin(6t+x_1)|_\bbR, \notag
\end{align}
and choose Neumann boundary conditions, $\fkB=\bfn\cdot\nabla$, in the spatial domain $\Omega=(0,1)\times(0,1)\subset\bbR^2$.

With~$A\coloneqq-\nu\Delta+\Id$ and~$A_{\rm rc}\coloneqq a\Id +b\cdot\nabla$ the simulations correspond to system~\eqref{sys-cl-BKY},
 \begin{subequations}\label{sys-y-BKY-zeta}
 \begin{align}
 &\dot y+Ay+ A_{\rm rc}y =0,\qquad\qquad\qquad t<\tau,\label{sys-y-BKY-zeta-d1}\\
 &\dot y+Ay+ A_{\rm rc}y =B\widehat u_\tau^{[\tau]},\qquad\qquad t>\tau,\label{sys-y-BKY-zeta-d2}\\
  & y(0)= y_0,&&\label{sys-y-BKY-zeta-ic}\\
&\mbox{with } \widehat u_\tau^{[\tau]}(t)=K\fkY(t-\tau,t,\widehat u_\tau^{[\tau]};\widehat y(t-\tau)).&&\label{sys-y-BKY-zeta-fkY}
   \end{align}
   \end{subequations}

 For the temporal discretization of this system two time steps, $t^{\rm s}$ for the predictor solver $\fkY$ in~\eqref{sys-y-BKY-zeta-fkY}, and $ t^{\rm s}_{\rm r}$ for  system~\eqref{sys-y-BKY-zeta-d1}--\eqref{sys-y-BKY-zeta-ic} with $t^{\rm s}_{\rm r} <  t^{\rm s}\le \frac{1}{2} \tau$ are introduced.  The reason for solving~\eqref{sys-y-BKY-zeta} in a temporal mesh finer than that of the predictor~$\fkY$ is to highlight the fact that in practice the predictor-based input will indeed be obtained numerically, while system~\eqref{sys-y-BKY-zeta} will be running in real (continuous) time.

The information of the predictor-solver~$\fkY$ will be updated at the time instances~$n t^{\rm s}$, $n\in\bbN$. It is then used to compute the control input which is  held constant over the interval $[nt^{\rm s},(n+1) t^{\rm s}) $ for the system \eqref{sys-y-BKY-zeta-d1}--\eqref{sys-y-BKY-zeta-ic}.

The  state-estimate is provided by an observer
 \begin{subequations}\label{sys-haty-BKY-num}
 \begin{align}
 &\dot {\widehat y}+A\widehat y+ A_{\rm rc}\widehat y =L(W\widehat y-w_\zeta),&& t<\tau;\\
 &\dot {\widehat y}+A\widehat y+ A_{\rm rc}\widehat y =B\widehat u_\tau^{[\tau]}+L(W\widehat y-w_\zeta),&& t>\tau;\\
  & \widehat y(0)= \widehat y_0;\qquad w_\zeta=Wy+\eta;&&
 \end{align}
   \end{subequations}
 as in~\eqref{sys-cl-BKY-haty1}--\eqref{sys-cl-BKY-haty2}, where we include an additional perturbation~$\zeta$ in order to model sensor measurement errors (we think of~$w(t)=Wy(t)$ as the correct output and of~$w(t)+\zeta(t)$ as the measured one). We consider the same time-step as the predictor and simulate the dynamics by taking an unbiased noise of the form 
$\zeta(n t^{\rm s})=\zeta_{\rm mag}(-1+2{\tt rand}_n)\in\bbR^{S_\varsigma}. $
 Here~$\zeta_{\rm mag}\ge0$ is a constant that determines the magnitude of the output error. The random vector~$v={\tt rand}_n$ at time~$n t^{\rm s}$ is  generated by the Matlab function {\tt rand}, thus~$z\coloneqq\zeta(n t^{\rm s})$ has entries~$z_i\in[-\zeta_{\rm mag},\zeta_{\rm mag}]$, $1\le i\le N$.  
Concerning reproducibility, as random number generators we have used the Matlab function~{\tt rng}, with seed~$1$, and the Mersenne Twister generator.
Finally, as initial states we have taken
\[
y_0(x_1,x_2)\coloneqq 1-2x_1x_2\quad\mbox{and}\quad \widehat y_0(x_1,x_2)\coloneqq -1-3x_2^2.
\]

\subsection{Actuators and sensors}
We consider the case of~$8$ actuators and~$8$ sensors.  Their supports together with the triangulation~$\fkT$ used to compute the  estimate provided by observer~\eqref{sys-haty-BKY-num} and the predictor-based input~\eqref{sys-y-BKY-zeta-fkY} are shown in Fig.~\ref{fig:mesh_act_pred}. 
 \begin{figure}[htbp]%
    \centering%
        {\includegraphics[width=.4\textwidth]{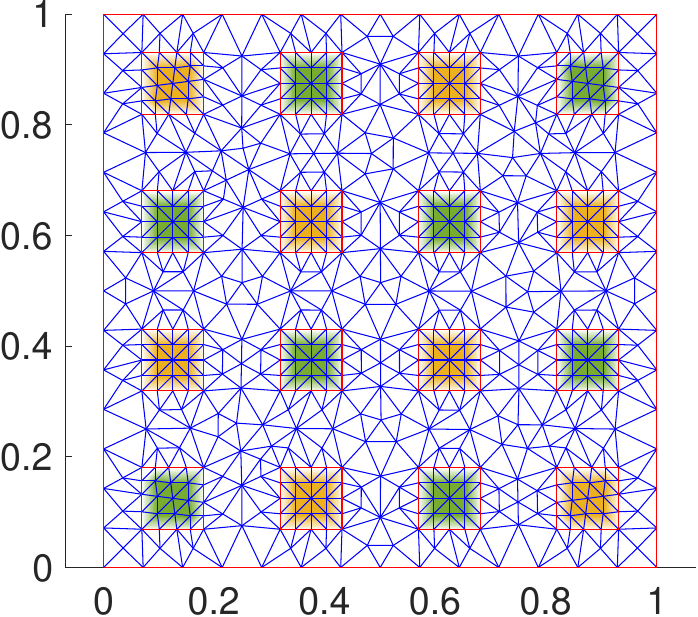}} 
  \caption{Spatial triangulation~$\fkT$.}%
     \label{fig:mesh_act_pred}%
\end{figure}

To better approximate the continuous-time dynamics of the infinite-dimensionality physical system~\eqref{sys-y-BKY-zeta-d1}--\eqref{sys-y-BKY-zeta-ic}, we will simulate it  on regular refinements of the mesh in Fig.~\ref{fig:mesh_act_pred}.
The nominal output-injection~$L$ and feedback-input~$K$ operators are taken as in~\eqref{seekKL},
\begin{equation}\notag
-\lambda_{ L} P_{\clW_M}\eqqcolon LW_S^\vee\quad\mbox{and}\quad -\lambda_{ K} P_{\clU_M}\eqqcolon U_M^\diamond K,
\end{equation}
with~$\Lambda=(\lambda_{ L},\lambda_{ K})=(200,100)$, where~$W_S$ denotes the set of sensors and~$U_M$ denotes the set of actuators. The free dynamics~$\Lambda=(0,0)$ will be considered as well.

\subsection{Spatio-temporal meshes}
As coarsest spatial approximation we use piecewise-linear finite elements (hat functions) corresponding to the triangulation~$\fkT$ in Fig.~\ref{fig:mesh_act_pred}. As coarsest temporal mesh we use a regular partition
\[
(nt^\rms)_{n=0}^\infty,\quad\mbox{with the time step}\quad t^\rms=10^{-3}.
\]
Thus, the coarsest spatio-temporal mesh is defined by the pair~$\fkM\coloneqq(\fkT,t^\rms)$.  
\begin{itemize}
\item The predictor-based input~\eqref{sys-y-BKY-zeta-fkY} and the observer~\eqref{sys-haty-BKY-num} are solved in the mesh~$\fkM$.
\item The real system~\eqref{sys-y-BKY-zeta} is simulated in refined  meshes
\[
\fkM_{\rm rf}=(\fkT_{\rm rf}, 2^{-(2+{\rm rf})}t^{\rms}),\qquad {\rm rf}\in\{0,1,2,3,4\},
\]
where~$\fkT_{0}\coloneqq\fkT$ and~$\fkT_{i+1}$ denotes a regular refinement of~$\fkT_{i}$.
\end{itemize}

\subsection{Results of simulations}
We  demonstrate the stabilizing performance of the proposed predictor--observer based delayed feedback input.
Firstly, in Fig.~\ref{fig:free-nom} we show that the free-dynamics is exponentially unstable.
 \begin{figure}[htbp]%
    \centering%
        {\includegraphics[width=.45\textwidth]{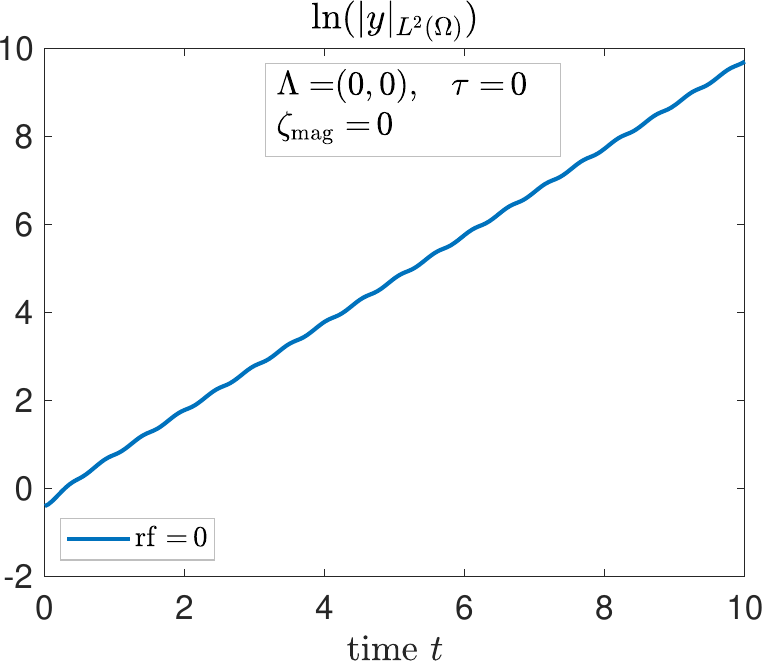}}
         \qquad%
         {\includegraphics[width=.45\textwidth]{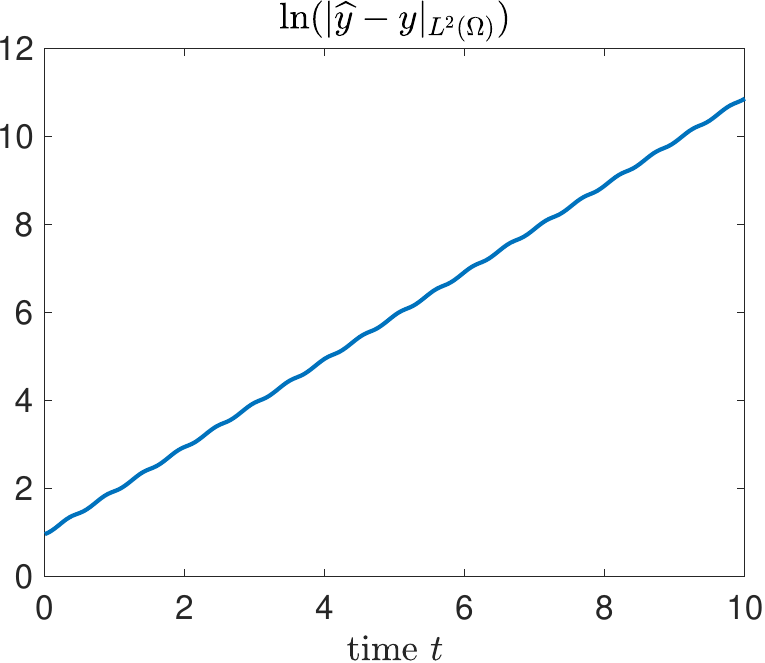}}
  \caption{Instability of the free dynamics and free observer.}%
     \label{fig:free-nom}%
\end{figure}

Then, in Fig.~\ref{fig:feed-delay} we can see that the the nominal (i.e., with~$\tau=0$)  closed-loop system with injection-feedback gain pair~$\Lambda=(200,100)$ is stable. In the same figure we show that the same feedback input becomes destabilizing as the delay increases. In Fig.~\ref{fig:feed-delay-zoom} we zoom the evolution of the norms for small instants for a better visualization of the response of the activation of the delayed input, with different values of~$\tau$.
\begin{figure}[htbp]%
    \centering%
         \subfigure[Norm of the state.]
         {\includegraphics[width=.45\textwidth]{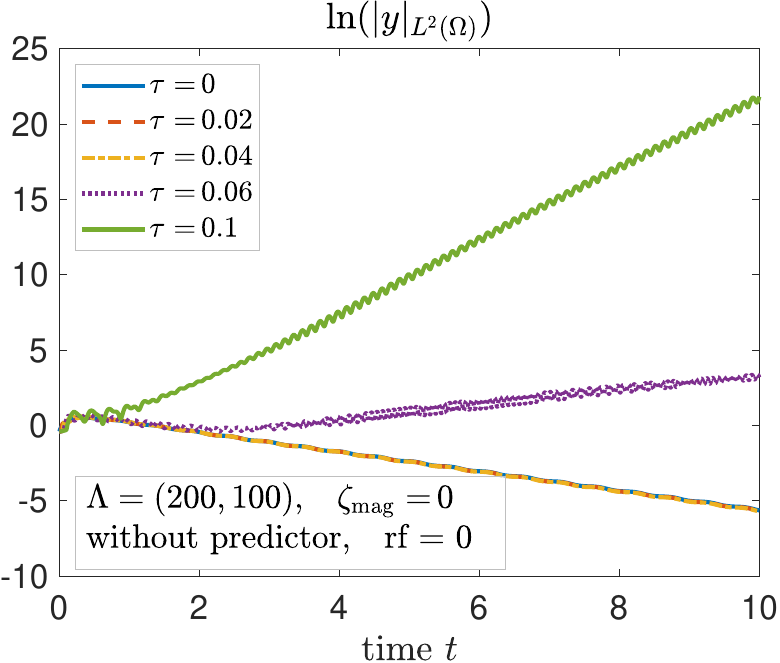}} %
         \qquad\subfigure[Norm of the state-estimate error.]
         {\includegraphics[width=.45\textwidth]{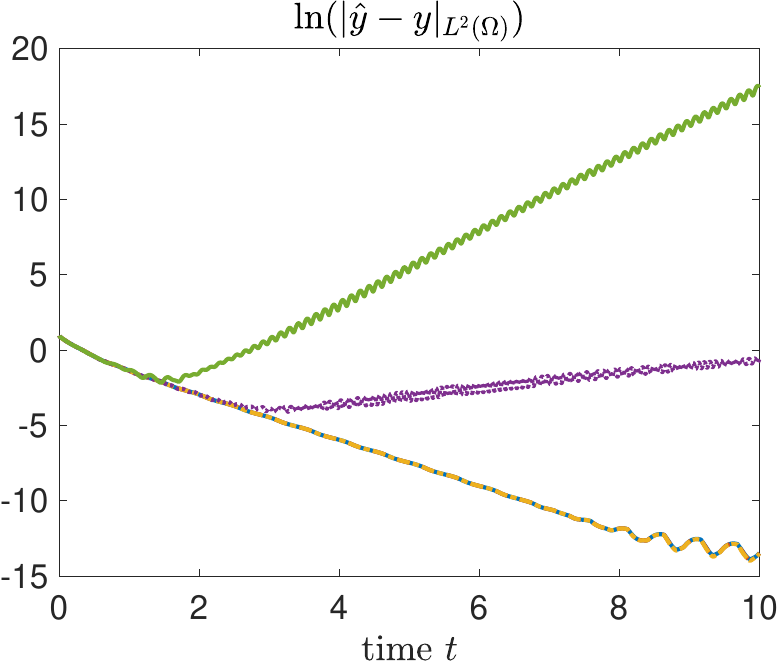}}
          \quad
        \caption{Evolution for feedback inputs delayed by time-$\tau$.}%
     \label{fig:feed-delay}%
\end{figure}
 \begin{figure}[htbp]%
    \centering%
         {\includegraphics[width=.8\textwidth]{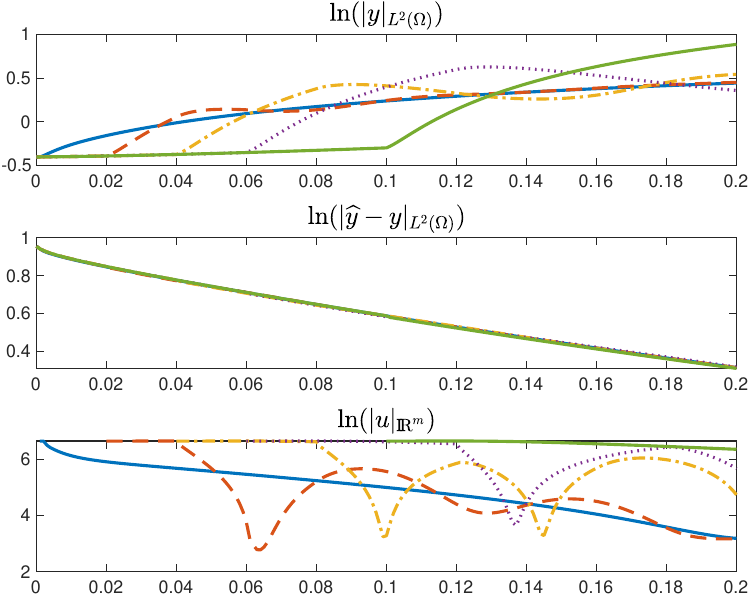}}
         \qquad
\caption{Time-zoom of the norms evolution in Fig~\ref{fig:feed-delay}.}%
     \label{fig:feed-delay-zoom}%
\end{figure}

To counteract the destabilizing effects of the time-delay observed in~Fig.\ref{fig:feed-delay}, we will include the time-$\tau$ predictor within the computation of the input.  The stabilizing contribution of the predictor is shown in Fig.~\ref{fig:feed-delay-pred}, 
\begin{figure}[htbp]%
    \centering%
         \subfigure[Norm of the state.]
         {\includegraphics[width=.45\textwidth]{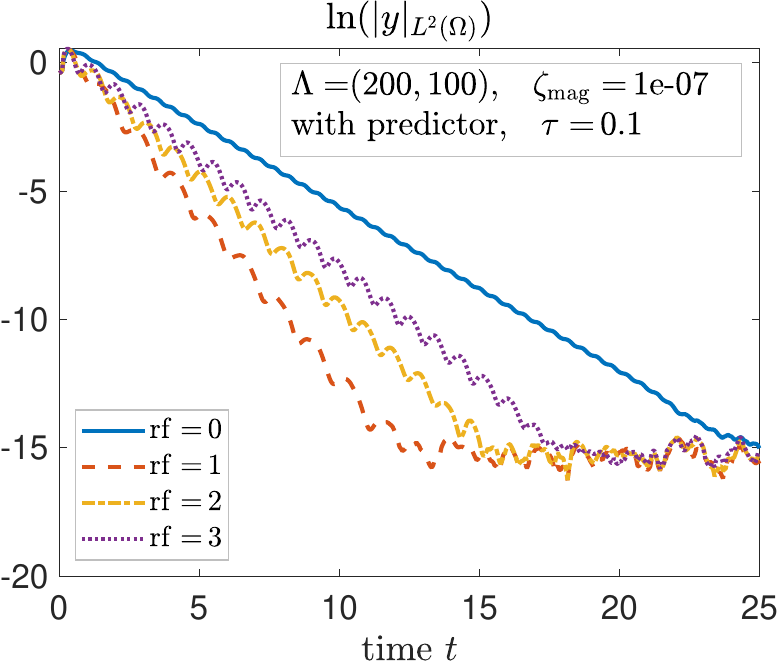}}%
         \qquad   \subfigure[Norm of the state-estimate error.\label{fig:feed-delay-pred-est}]
         {\includegraphics[width=.45\textwidth]{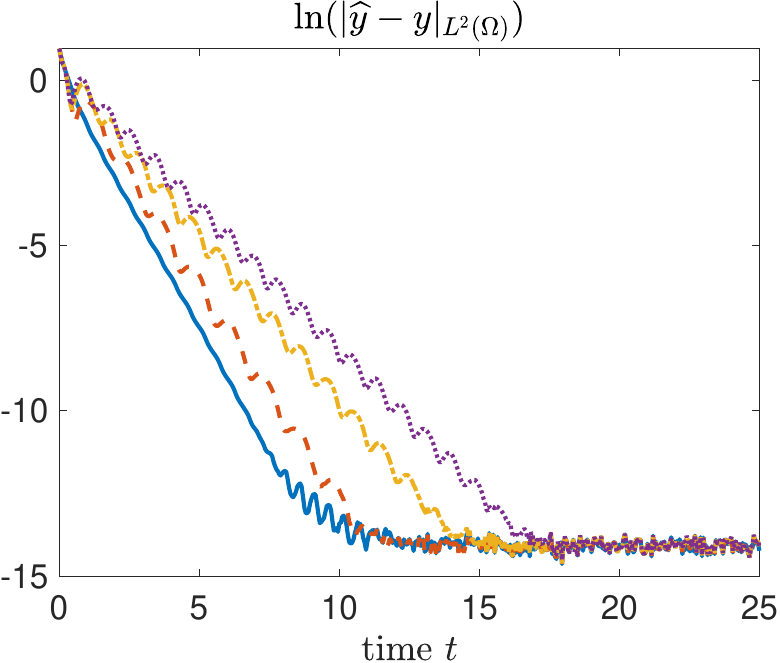}}
\caption{With delayed feedback inputs, using a predictor.}%
     \label{fig:feed-delay-pred}%
\end{figure}
for~$\tau=0.1$, where we see that the instability observed in Fig.~\ref{fig:feed-delay} is, indeed, counteracted by the use of the time-$\tau$-ahead state predictions.  

Further, we simulate the ``real'' system~\eqref{sys-y-BKY-zeta} in the spatio-temporal refinements~$\fkM_{\rm rf}$, $0\le{\rm rf}\le 3$ of the mesh~$\fkM_{0}$. Still, the predictor~\eqref{sys-y-BKY-zeta-fkY} and the observer~\eqref{sys-haty-BKY-num} are solved in the coarse mesh~$\fkM$. The results depicted in Fig.~\ref{fig:feed-delay-pred} show that the input obtained by using the estimate provided by the observer,  is able to stabilize~\eqref{sys-y-BKY-zeta} with the help of the predictor.

In Fig.~\ref{fig:feed-delay-pred-T1} 
\begin{figure}[htbp]%
    \centering%
          {\includegraphics[width=.8\textwidth]{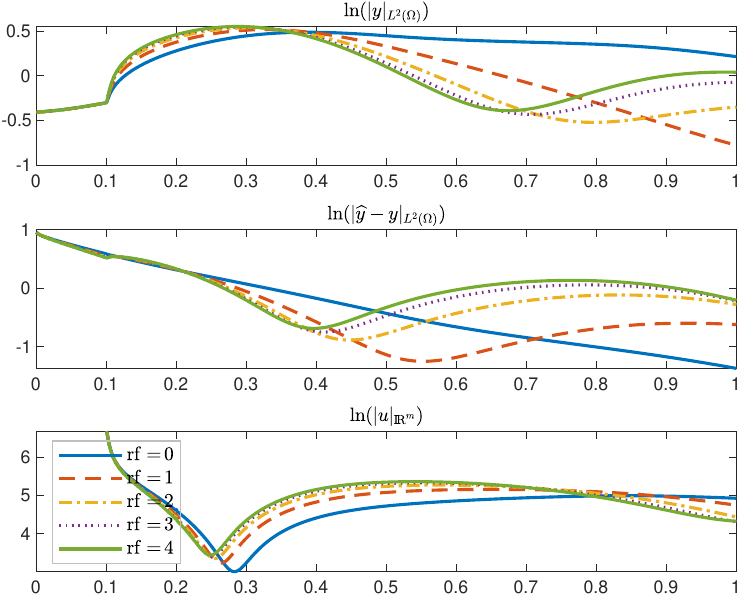}}
         \qquad
     \caption{Time-zoom including the norms in Fig.~\ref{fig:feed-delay-pred}.}%
     \label{fig:feed-delay-pred-T1}%
\end{figure}
we zoom on the behavior for small time instants, again to better see the response to the activation of the input. Further, we have run an extra simulation for one more refinement of the mesh used to simulate the real system. In particular, we can observe  convergence of the evolution of the norms as the mesh is refined.

Note that the magnitude~$\zeta_{\rm mag}$ of the state-dependent test-noise~$\zeta$ satisfies~$\ln(\lambda_1\zeta_{\rm mag})=\ln(200\cdot 10^{-7})\approx-10.8198$, which is the magnitude of the state-dependent noisy component of the output. This explains the oscillations observed for large time, and also the fact that the norms plotted in Fig.~\ref{fig:feed-delay-pred} do not drop below a certain threshold.

We also have to take into consideration the  measurement errors associated with the numerical output operators. Roughly speaking, in the experiments, the ``accuracy'' of the average measurements depend on the spatial mesh. Thus the (discretized) output operator in the coarse mesh has a different accuracy than that of the analogue (discretized) output operator in a refined mesh. Consequently there is a state-dependent measurement error as well. The results in Fig.~\ref{fig:feed-delay-pred} shows  the robustness of the constructed input against such state-dependent errors as well. 

Note also that even in the case~${\rm rf}=0$ we have the presence of a state-dependent numerical error, leading to a state-dependent output error. Indeed,  though the mesh~$\fkM_{0}$ uses the same triangulation as~$\fkM$, it still differs in the time-step, namely~$\frac14t^\rms$ versus~$t^\rms$. This could explain the oscillations starting at time~$t=8$ in Fig.~\ref{fig:feed-delay-pred-est} for the case~${\rm rf}=0$.

Finally, in Fig.~\ref{fig:feed-delay-noise} we show the response of the constructed input against the magnitude of the state-independent noise~$\zeta$. As expected the norms~$\norm{y(t)}{L^2(\Omega)}$ of the state and~$\norm{\widehat y(t)-y(t)}{L^2(\Omega)}$ of the state-estimate error converge to a neighborhood of zero with size proportional to the magnitude~$\zeta_{\rm mag}$ of the noise. 
\begin{figure}[htbp]%
    \centering%
         \subfigure[Norm of the state.]
         {\includegraphics[width=.45\textwidth]{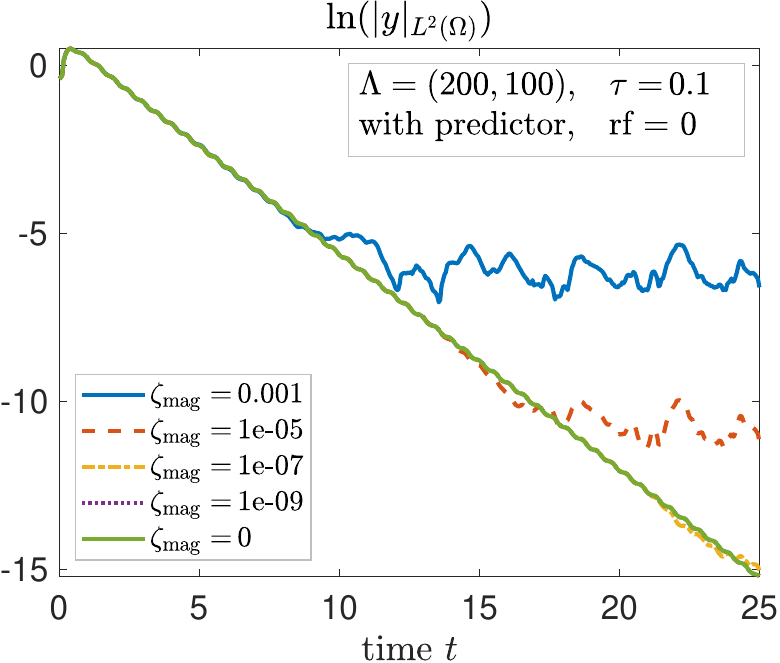}}%
         \qquad        \subfigure[Norm of the state-estimate error.]
         {\includegraphics[width=.45\textwidth]{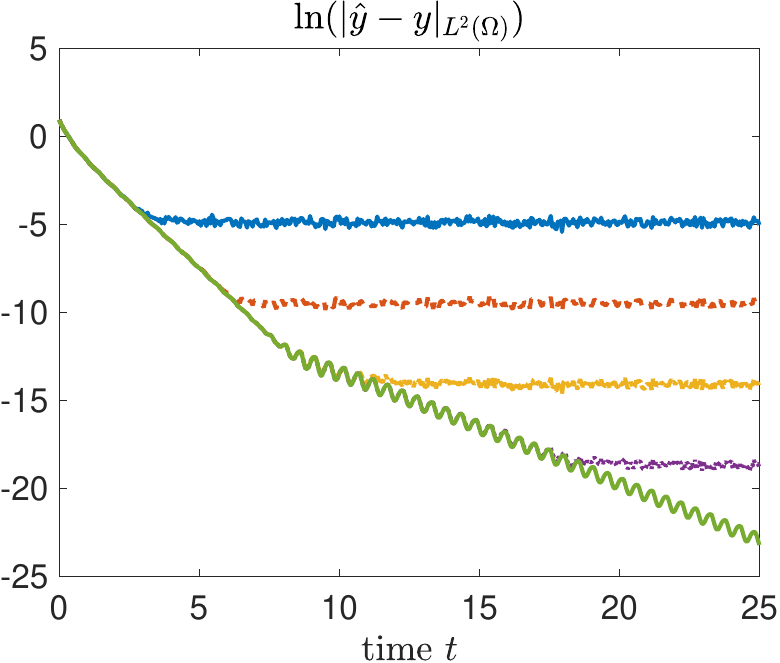}} 
  \caption{Effect of the magnitude~$\zeta$ of the noise in the output.}%
     \label{fig:feed-delay-noise}%
\end{figure}

\section{Concluding remarks}\label{S:finremks}

Assuming that the time delay~$\tau\ge0$ of the input is known, we have proposed a strategy to compute a control input, stabilizing linear parabolic equations, when subject to the time delay~$\tau$.  The construction of the input  utilizes a prediction of the controlled state at a future time~$t+\tau$, based on estimates of the controlled state at the present time~$t$. Asymptotically, as~$t\to\infty$, the controlled state converges exponentially to a neighborhood of zero proportional to the magnitude of the state estimation error. In particular, for exponential observers, the control input is exponentially stabilizing in the ideal situation in which the state-estimate error vanishes.

To compute the input~$K_\tau(t,\widehat y(t))$ as in~\eqref{KfkY-Kt} we use the prediction
$
Y(t+\tau)=\fkY(t,t+\tau,\widehat u_\tau^{[\tau]};\widehat y(t)),
$
of the state at time~$t+\tau$, for each time~$t>0$. In practice, after temporal discretization, and assuming a regular partition of the time-interval, this means computing $Y(n t^{\rms}+\tau)$, for each~$t=nt^{\rms}$ where~$t^{\rms}$ denotes the time-step of the (discretized) observer and predictor. 
Also, we can use several predictor-solvers running simultaneously, each corresponding to a time interval~$[nt^\rms,nt^\rms+\tau]$.

\appendix
\section*{Appendix}
\setcounter{section}{1}
\setcounter{theorem}{0} \setcounter{equation}{0}
\numberwithin{equation}{section}

The scalar {\sc ode} example in Section~\ref{S:dest-delayK} can be used  to show the destabilizing effect of time-delayed inputs, for a more general class of autonomous parabolic-like equations, including heat equations. Indeed, let~$(H,V,A)$  satisfy Assumptions~\ref{A:HV} and~\ref{A:A}. For example, $H=L^2(\Omega)$, $V=W^{1,2}(\Omega)$, and $A$ is the shifted scaled Neumann Laplacian~$-\nu\Delta+\Id:V\to V'$ as in~\eqref{sys-y-BKY-zeta}.

By Assumptions~\ref{A:HV} and~\ref{A:A}, it follows that~$A$ has a compact inverse~$A^{-1}\colon H\to H$. So, we can fix a sequence~$(\alpha_i,e_i)_{i\in\bbN_+}$ of eigenpairs of~$A$, 
\begin{align}
&Ae_i=\alpha_{i}e_i;\qquad0<\alpha_{i}\le\alpha_{i+1};\qquad\lim_{i\to\infty}\alpha_{i}=\infty;\notag\\
&(e_i)_{i\in\bbN_+}\quad\mbox{forms an orthonormal basis in } H.\notag
\end{align}

Let us fix~$\rho>0$ and choose~$m$ large enough, so that~$\alpha_{m+1}>\rho+\alpha_1$. Then, as actuators we take the
first~$m$ eigenfunctions, $E_m=\{e_i\mid 1\le i\le m\}.$

We define~$\underline K_m\in\clL(H,\bbR^{m})$, as
\begin{equation}\notag
h\mapsto \underline K_m h\coloneqq (v_1,v_2,\dots,v_{m}) ,\qquad v_i\coloneqq (e_i,h)_H,
\end{equation}
and  take~$A_{\rm rc}$ (satisfying Assumption~\ref{A:Arc}) and~$K$ as
 \begin{align}\notag
A_{\rm rc}=-(\rho+\alpha_1)\Id, \qquad  K=\kappa  \underline K_m,\quad\kappa<-\rho<0.
\end{align} 
In this case, system~\eqref{sys-y-intro0}, with vanishing delay,  becomes
 \begin{align}\notag
 \dot y(t)  =-(A-(\rho+\alpha_1)\Id-\kappa B \underline K_m)y(t) ,\qquad y(0)= y_0,
\end{align} 
with~$Bz\coloneqq{\textstyle \sum\limits_{i=1}^m}z_ie_i$ as in~\eqref{B-intro}.
We can see that~$B \underline K_m=P_{\clE_m}$. Hence,
we can write
 \begin{equation}\label{sys-y-appx-ex-1} 
 \begin{split}
  &\dot y(t)  =\Xi_\kappa y(t) ,\\
  &\mbox{with }\Xi_\kappa\coloneqq-(A-(\rho+\alpha_1)\Id-\kappa P_{\clE_m}).
 \end{split}
\end{equation} 
Note that the eigenfunctions of~$\Xi_\kappa$ coincide with those of~$A$, $\Xi_\kappa e_i=\beta_{\kappa,i} e_i$ with  eigenvalues 
\begin{align}
\beta_{\kappa,i}&=-\alpha_i+(\rho+\alpha_1)+\kappa,\quad&& 1\le i\le m;\notag\\
\beta_{\kappa,i}&=-\alpha_i+(\rho+\alpha_1),\quad&&  i\ge m+1.\notag
\end{align} 
\begin{enumerate}[label=(\roman*)]
\item[] \hspace{-2em} We observe the following:
\item The free dynamics (i.e., with~$\kappa=0$)  is unstable, because~$\beta_{0,1}=\rho>0$. 
\item The controlled nominal dynamics (i.e., with $\tau=0$), is exponentially stable, because
\begin{align}
\beta_{\kappa,i}&\le \rho+\kappa<0,\quad&& 1\le i\le m,\notag\\
\beta_{\kappa,i}&\le-\alpha_{m+1}+(\rho+\alpha_1)<0,\quad&&  i\ge m+1,\notag
\end{align} 
thus, for$\overline\beta\coloneqq\max\{\rho+\kappa,-\alpha_{m+1}+(\rho+\alpha_1)$, we have~$\beta_i\le\overline\beta<0$ for all~$i\in\bbN_+$.
\item\label{item-instdelay} If~$\tau>\rho^{-1}$, the controlled delayed dynamics is exponentially unstable, 
for reasons given below.
\end{enumerate}
With~$u^{[0]}(t)=Ky(t)=-\kappa P_{\clE_m}y(t)$ subject to time-delay~$\tau$, ~\eqref{sys-y-appx-ex-1} becomes, with~$A_1\coloneqq-(A-(\rho+\alpha_1)\Id)$,
 \begin{align}\label{sys-y-appx-ex-2} 
   \dot y(t)  =\begin{cases}
 A_1y(t),&\mbox{  }t<\tau,\\
A_1y(t) +\kappa P_{\clE_m} y(t-\tau),&\mbox{ }t>\tau,
 \end{cases}
 \end{align} 
Observe that~$y_1(t)\coloneqq (y(t),e_1)_H$  is the scalar corresponding to the coordinate~$y_1(t)e_1\coloneqq P_{\clE_1}y(t)$ of the orthogonal projection, in~$H$, of the solution  onto the space~$\clE_1=\bbR e_1$ spanned by the first eigenfunction of~$\Xi$.  We find that~$y_1(t)$ satisfies the dynamics in~\eqref{sys-y-ex-Ktau}, that is,
 \begin{align}\label{sys-y-appx-ex-3} 
 \dot y_1(t) &= \begin{cases}
\rho y_1(t),&\mbox{for }t\in[0,\tau),\\
\rho y_1(t) +\kappa y_1(t-\tau),&\mbox{for }t\in[\tau,\infty),
 \end{cases}\\
 y_1(0)&= (y(0),e_1)_H.
\end{align}
Further, if we take~$y_0=y_1(0)e_1$, then~$y(t)=y_1(t)e_1$. From the instability of~\eqref{sys-y-appx-ex-3} (i.e., of~\eqref{sys-y-ex-Ktau}) shown in Section~\ref{S:dest-delayK}, when~$\tau>\rho^{-1}$,  the instability of~\eqref{sys-y-appx-ex-2} follows.

\renewcommand*{\bibfont}{\normalfont\small}
\bibliographystyle{plainurl}
\bibliography{Delay}

\begin{thebibliography}{10}

\bibitem{Artstein82}
Z.~Artstein.
\newblock Linear systems with delayed controls: A reduction.
\newblock {\em IEEE Trans. Automat. Control}, 27(4):869--879, 1982.
\newblock \href {http://dx.doi.org/10.1109/TAC.1982.1103023}
  {\path{doi:10.1109/TAC.1982.1103023}}.

\bibitem{AzmiKunRod23-ieee}
B.~Azmi, K.~Kunisch, and S.~S. Rodrigues.
\newblock Saturated feedback stabilizability to trajectories for the
  {S}chl\"{o}gl parabolic equation.
\newblock {\em IEEE Trans. Automat. Control}, 68(12):7089--7103, 2023.
\newblock \href {http://dx.doi.org/10.1109/TAC.2023.3247511}
  {\path{doi:10.1109/TAC.2023.3247511}}.

\bibitem{AzmiRod20}
B.~Azmi and S.~S. Rodrigues.
\newblock Oblique projection local feedback stabilization of nonautonomous
  semilinear damped wave-like equations.
\newblock {\em J. Differential Equations}, 269(7):6163--6192, 2020.
\newblock \href {http://dx.doi.org/10.1016/j.jde.2020.04.033}
  {\path{doi:10.1016/j.jde.2020.04.033}}.

\bibitem{BPetriPrieurTrelat18}
D.~{Bresch-Pietri}, C.~Prieur, and E.~Trel\'at.
\newblock New formulation of predictors for finite-dimensional linear control
  systems with input delay.
\newblock {\em Syst Control Lett}, 113:9--16, 2018.
\newblock \href {http://dx.doi.org/10.1016/j.sysconle.2017.12.007}
  {\path{doi:10.1016/j.sysconle.2017.12.007}}.

\bibitem{Datko88}
R.~Datko.
\newblock Not all feedback stabilized hyperbolic systems are robust with
  respect to small time delays in their feedbacks.
\newblock {\em SIAM J. Control Optim.}, 26(3):607--713, 1988.
\newblock \href {http://dx.doi.org/10.1137/0326040}
  {\path{doi:10.1137/0326040}}.

\bibitem{DjebourTakahValein22}
I.A. Djebour, T.~Takahashi, and J.~Valein.
\newblock Feedback stabilization of parabolic systems with input delay.
\newblock {\em Math. Control Relat. Fields}, 12(2):405--420, 2022.
\newblock \href {http://dx.doi.org/10.3934/mcrf.2021027}
  {\path{doi:10.3934/mcrf.2021027}}.

\bibitem{FursikovImanuvilov99}
A.V. Fursikov and O.Yu. Imanuvilov.
\newblock Exact controllability of the {N}avier--{S}tokes and {B}oussinesq
  equations.
\newblock {\em Russian Math. Surveys}, 54(3):565--618, 1999.
\newblock \href {http://dx.doi.org/10.1070/RM1999v054n03ABEH000153}
  {\path{doi:10.1070/RM1999v054n03ABEH000153}}.

\bibitem{Haad06}
S.~Haad.
\newblock An evolution equation approach to nonautonomous linear systems with
  state, input, and output delays.
\newblock {\em SIAM J. Control Optim.}, 45(1):246--272, 2006.
\newblock \href {http://dx.doi.org/10.1137/040612178}
  {\path{doi:10.1137/040612178}}.

\bibitem{JadachowskiMeurerKugi15}
L.~Jadachowski, T.~Meurer, and A.~Kugi.
\newblock Backstepping observers for linear {PDE}s on higher-dimensional
  spatial domains.
\newblock {\em Automatica J. IFAC}, 51:85--97, 2015.
\newblock \href {http://dx.doi.org/10.1016/j.automatica.2014.10.108}
  {\path{doi:10.1016/j.automatica.2014.10.108}}.

\bibitem{Krstic08}
M.~Krstic.
\newblock Lyapunov tools for predictor feedbacks for delay systems: Inverse
  optimality and robustness to delay mismatch.
\newblock {\em Automatica J. IFAC}, 44:2930--2935, 2008.
\newblock \href {http://dx.doi.org/10.1016/j.automatica.2008.04.010}
  {\path{doi:10.1016/j.automatica.2008.04.010}}.

\bibitem{Krstic09}
M.~Krstic.
\newblock Control of an unstable reaction–diffusion {PDE} with long input
  delay.
\newblock {\em Syst. Control Lett.}, 58:773--782, 2009.
\newblock \href {http://dx.doi.org/10.1016/j.sysconle.2009.08.006}
  {\path{doi:10.1016/j.sysconle.2009.08.006}}.

\bibitem{KunRodWal21}
K.~Kunisch, S.~S. Rodrigues, and D.~Walter.
\newblock Learning an optimal feedback operator semiglobally stabilizing
  semilinear parabolic equations.
\newblock {\em Appl. Math. Optim.}, 84(S1):277--318, 2021.
\newblock \href {http://dx.doi.org/10.1007/s00245-021-09769-5}
  {\path{doi:10.1007/s00245-021-09769-5}}.

\bibitem{KunRod19-cocv}
K.~Kunisch and S.S. Rodrigues.
\newblock Explicit exponential stabilization of nonautonomous linear
  parabolic-like systems by a finite number of internal actuators.
\newblock {\em ESAIM Control Optim. Calc. Var.}, 25:art67, 2019.
\newblock \href {http://dx.doi.org/10.1051/cocv/2018054}
  {\path{doi:10.1051/cocv/2018054}}.

\bibitem{KwonPearson80}
W.~Kwon and A.~Pearson.
\newblock Feedback stabilization of linear systems with delayed control.
\newblock {\em IEEE Trans. Automat. Control}, 25(2):266--269, 1980.
\newblock \href {http://dx.doi.org/10.1109/TAC.1980.1102288}
  {\path{doi:10.1109/TAC.1980.1102288}}.

\bibitem{LhachemiPrieur21}
H.~Lhachemi and C.~Prieur.
\newblock Feedback stabilization of a class of diagonal infinite-dimensional
  systems with delay boundary control.
\newblock {\em IEEE Trans. Automat. Control}, 66(1):105--120, 2021.
\newblock \href {http://dx.doi.org/10.1109/TAC.2020.2975003}
  {\path{doi:10.1109/TAC.2020.2975003}}.

\bibitem{LhachemiPrieur22}
H.~Lhachemi and C.~Prieur.
\newblock Predictor-based output feedback stabilization of an input delayed
  parabolic {PDE} with boundary measurement.
\newblock {\em Automatica J. IFAC}, 137:110115, 2022.
\newblock \href {http://dx.doi.org/10.1016/j.automatica.2021.110115}
  {\path{doi:10.1016/j.automatica.2021.110115}}.

\bibitem{LhachemiPrieurShorten19}
H.~Lhachemi, C.~Prieur, and R.~Shorten.
\newblock An {LMI} condition for the robustness of constant-delay linear
  predictor feedback with respect to uncertain time-varying input delays.
\newblock {\em Automatica J. IFAC}, 109:108551, 2019.
\newblock \href {http://dx.doi.org/10.1016/j.automatica.2019.108551}
  {\path{doi:10.1016/j.automatica.2019.108551}}.

\bibitem{LhachemiPrieurTrelat21}
H.~Lhachemi, C.~Prieur, and E.~Tr\'elat.
\newblock {PI} regulation of a reaction–diffusion equation with delayed
  boundary control.
\newblock {\em IEEE Trans. Automat. Control}, 66(4):1573--1587, 2021.
\newblock \href {http://dx.doi.org/10.1109/TAC.2020.2996598}
  {\path{doi:10.1109/TAC.2020.2996598}}.

\bibitem{ManitiusOlbrot79}
A.~Manitius and A.~Olbrot.
\newblock Finite spectrum assignment problem for systems with delays.
\newblock {\em IEEE Trans. Automat. Control}, 24(4):541--552, 1979.
\newblock \href {http://dx.doi.org/10.1109/TAC.1979.1102124}
  {\path{doi:10.1109/TAC.1979.1102124}}.

\bibitem{Munteanu23}
I.~Munteanu.
\newblock Stabilisation of non-diagonal infinite-dimensional systems with delay
  boundary control.
\newblock {\em Internat. J. Control}, 96(7):1672--1680, 2023.
\newblock \href {http://dx.doi.org/10.1080/00207179.2022.2063193}
  {\path{doi:10.1080/00207179.2022.2063193}}.

\bibitem{QiKrstic21}
J.~Qi and M.~Krstic.
\newblock Compensation of spatially varying input delay in distributed control
  of reaction-diffusion {PDE}s.
\newblock {\em IEEE Trans. Automat. Control}, 69(9):4069--4083, 2021.
\newblock \href {http://dx.doi.org/10.1109/TAC.2020.3027662}
  {\path{doi:10.1109/TAC.2020.3027662}}.

\bibitem{Rod14-na}
S.~S. Rodrigues.
\newblock Local exact boundary controllability of {3D} {N}avier--{S}tokes
  equations.
\newblock {\em Nonlinear Anal.}, 95:175--190, 2014.
\newblock \href {http://dx.doi.org/10.1016/j.na.2013.09.003}
  {\path{doi:10.1016/j.na.2013.09.003}}.

\bibitem{Rod21-aut}
S.S. Rodrigues.
\newblock Oblique projection output-based feedback stabilization of
  nonautonomous parabolic equations.
\newblock {\em Automatica J. IFAC}, 129:Paper no~109621, 2021.
\newblock \href {http://dx.doi.org/10.1016/j.automatica.2021.109621}
  {\path{doi:10.1016/j.automatica.2021.109621}}.

\bibitem{Rod21-jnls}
S.S. Rodrigues.
\newblock Semiglobal oblique projection exponential dynamical observers for
  nonautonomous semilinear parabolic-like equations.
\newblock {\em J. Nonlinear Sci.}, 31:art100, 2021.
\newblock \href {http://dx.doi.org/10.1007/s00332-021-09756-8}
  {\path{doi:10.1007/s00332-021-09756-8}}.

\bibitem{Smith11}
H.~Smith.
\newblock {\em An Introduction to Delay Differential Equations with
  Applications to the Life Sciences}.
\newblock Number~57 in Texts Appl. Math. Springer, 2011.
\newblock \href {http://dx.doi.org/10.1007/978-1-4419-7646-8}
  {\path{doi:10.1007/978-1-4419-7646-8}}.

\bibitem{Temam01}
R.~Temam.
\newblock {\em {N}avier--{S}tokes Equations: Theory and Numerical Analysis}.
\newblock AMS Chelsea Publishing, Providence, RI, {reprint of the 1984}
  edition, 2001.
\newblock URL: \url{https://bookstore.ams.org/chel-343-h}.

\bibitem{Vasquez25}
R.~Vasquez.
\newblock Backstepping control laws for higher-dimensional {PDE}s: spatial
  invariance and domain extension methods.
\newblock {\em IMA J. Math. Control Inf.}, 42(2):1--29, 2025.
\newblock \href {http://dx.doi.org/10.1093/imamci/dnaf018}
  {\path{doi:10.1093/imamci/dnaf018}}.

\bibitem{VazquezAuriolBArgomedoKrstic26}
R.~Vasquez, J.~Auriol, F.~Bribiesca-Argomedo, and M.~Krstic.
\newblock Backstepping for partial differential equations: a survey.
\newblock {\em Automatica J. IFAC}, 183:112572, 2026.
\newblock \href {http://dx.doi.org/10.1016/j.automatica.2025.112572}
  {\path{doi:10.1016/j.automatica.2025.112572}}.

\bibitem{VasquezKrstic16}
R.~Vasquez and M.~Krstic.
\newblock Explicit output-feedback boundary control of reaction-diffusion
  {PDE}s on arbitrary-dimensional balls.
\newblock {\em ESAIM Control Calc. Var.}, 22(4):1078--1096, 2016.
\newblock \href {http://dx.doi.org/10.1051/cocv/2016033}
  {\path{doi:10.1051/cocv/2016033}}.

\bibitem{WangDiagneKrstic25}
S.~Wang, M.~Diagne, and M.~Krstic.
\newblock Deep learning of delay-compensated backstepping for
  reaction–diffusion {PDE}s with a delayed distributed input.
\newblock {\em IEEE Trans. Automat. Control}, 70(6):4209--4216, 2025.
\newblock \href {http://dx.doi.org/10.1109/TAC.2025.3538755}
  {\path{doi:10.1109/TAC.2025.3538755}}.

\bibitem{WangDiagneQi22}
S.~Wang, M.~Diagne, and J.~Qi.
\newblock Delay-adaptive predictor feedback control of
  reaction-advection–diffusion {PDE}s with a delayed distributed input.
\newblock {\em IEEE Trans. Automat. Control}, 67(7):3762--3762, 2022.
\newblock \href {http://dx.doi.org/10.1109/TAC.2021.3109512}
  {\path{doi:10.1109/TAC.2021.3109512}}.

\bibitem{WangQiDiagne21}
S.~Wang, J.~Qi, and M.~Diagne.
\newblock Adaptive boundary control of reaction–diffusion {PDE}s with unknown
  input delay.
\newblock {\em Automatica J. IFAC}, 134:109909, 2021.
\newblock \href {http://dx.doi.org/10.1016/j.automatica.2021.109909}
  {\path{doi:10.1016/j.automatica.2021.109909}}.

\bibitem{Wu74}
M.Y. Wu.
\newblock A note on stability of linear time-varying systems.
\newblock {\em IEEE Trans. Automat. Control}, 19(2):162, 1974.
\newblock \href {http://dx.doi.org/10.1109/TAC.1974.1100529}
  {\path{doi:10.1109/TAC.1974.1100529}}.

\end{thebibliography}

\end{document}